\newcommand{\R}{\mathbb{R}}
\newcommand{\abs}[1]{\left\vert#1\right\vert}
\newcommand{\delbar}{\overline{\nabla}}
\newcommand{\rnnn}{\mathbb R^{n}}
\newcommand{\sn}{ {\mathbb{S}^{n-1}}}
\newcommand{\rn}{\mathbb R}
\newcommand{\psum}{{+_{\negthinspace\kern-2pt p}}\,}
\newcommand{\qsum}[1]{{+_{\negthinspace\kern-2pt #1}}\,}
\newcommand{\dpsum}{{\tilde+_{\negthinspace\kern-1pt p}}\,}
\newcommand{\dqsum}[1]{{\tilde+_{\negthinspace\kern-1pt #1}}\,}
\newcommand{\lsub}[1]{\hskip -1.5pt\lower.5ex\hbox{$_{#1}$}}
\numberwithin{equation}{section}
\newtheorem{theo}{Theorem}[section]
\newtheorem{lem}[theo]{Lemma}
\theoremstyle{definition}
\begin{document}

\title{The chord Gauss curvature flow and its $L_{p}$ chord Minkowski problem}

\author[J. Hu]{Jinrong Hu}
\address{School of Mathematics, Hunan University, Changsha, 410082, Hunan Province, China}
\email{Hu\_jinrong097@163.com}
\author[Y. Huang]{Yong Huang}
\address{School of Mathematics, Hunan University, Changsha, 410082, Hunan Province, China}
\email{huangyong@hnu.edu.cn}
\author[J. Lu]{Jian Lu}
\address{School of Mathematical Sciences, South China Normal University, 510631,  Guangzhou, China}
\email{lj-tshu04@163.com}
\author[S. Wang]{Sinan Wang}
\address{School of Mathematics, Hunan University, Changsha, 410082, Hunan Province, China}
\email{wangsinan@hnu.edu.cn}

\begin{abstract}
In this paper, the $L_{p}$ chord Minkowski problem is concerned. Based on the results showed in \cite{HJ23}, we obtain a new existence result of solutions to this problem in terms of smooth measures by using a nonlocal Gauss curvature flow for $p>-n$ with $p\neq 0$.
 \end{abstract}

\keywords{The $L_{p}$ Chord Minkowski problem, new {M}onge-{A}mp\`ere equation, geometric flow}
\subjclass[2010]{35k55, 52A38, 58J35 }
\thanks{The research is supported by the National Natural Science Foundation of China (12171144,12231006 to Huang, 12122106 to Lu).}
\maketitle

\baselineskip18pt

\parskip3pt

\section{Introduction}
\label{Sec1}

The classical Brunn-Minkowski theory lies in the core of convex geometry. The geometric measures of convex bodies in the Euclidean space $\rnnn$ and their associated Minkowski problems capture the central positions in the framework of the Brunn-Minkowski theory. As a prelude to this subject, the classical Minkowski problem characterizing the surface area measure, which was proposed and solved by Minkowski himself in \cite{M897,M903}. Since then, the existence, uniqueness and regularity of this problem  have quickly been systematically investigated in a series of paper \cite{A39,A42,CY76,CW06,FJ38,HgLYZ05,L93,LYZ04,N53,Zhu14,Zhu15}. Apart from area measures, the curvature measures introduced by Federer \cite[p. 224]{S14} compose another nonnegligible family of measures in the Brunn-Minkowski theory.

As one generalization of the Brunn-Minkowski theory, the $L_{p}$ Brunn-Minkowski theory, initiated by Firey but which was indeed endowed life by Lutwak \cite{L93,L96} when he studied the $p$-Minkowski combination and defined the $L_{p}$ surface area measure. The $L_p$ Minkowski problem prescribing the $L_{p}$ surface area measure is the fundamental problem, which was first posed and attacked by Lutwak ~\cite{L93}. After that, the researches on this problem have been the breeding ground for many valuable results in \cite{B19,B17,CL17,F62,LW13,Zhu14,Zh15,Zhu15} and their references. The most challenging cases include, the log-Minkowski problem for $p=0$ (see B\"{o}r\"{o}czky-LYZ \cite{BLYZ12} and their references), and the centro-affine Minkowski problem for $p=-n$ (see Chou-Wang \cite{CW06} or Zhu \cite{Zhu15}, etc.).

As another extension of the Brunn-Minkowski theory, the dual Brunn-Minkowski theory, was launched by Lutwak in the 1970s \cite{L75}, but it indeed gained life recently when Huang-LYZ \cite{HLYZ16} discovered a new family of fundamental geometric measures, named as the dual curvature measure. These measures are dual to Federer's curvature measures, which bear a same role as area measures and curvature measures in the Brunn-Minkowski theory. This work leads to the emergence of the dual Minkowski problem prescribing dual curvature measures and provides the sufficient condition on the existence of solution to this problem.  Since then, the dual Brunn-Minkowski theory  have been obtained the rapid and flourishing developments for many valuable outputs and applications, for more reading, such as refer to \cite{BF19,BLYZ19,BLYZ20,HLYZ18,HZ18,JW17,LSW20,Z17,Z18}.

 Very recently, Lutwak-Xi-Yang-Zhang \cite{LXYZ} discovered a new family of geometric measures, deemed as chord measures, which are the the differential of chord integrals coming from integral geometry (For more knowledge on integral geometry, readers are recommended to see books of Santal\'{o} \cite{S04} and Ren \cite{R94}). The chord integral $I_{q}(\Omega)$ of the convex body $\Omega\subset \rnnn$ is defined by
\begin{equation}\label{chord}
I_{q}(\Omega)=\int_{\pounds^{n}}|\Omega \cap \ell|^{q}d\ell, \quad {\rm real}\ q\geq 0,
\end{equation}
where $|\Omega \cap \ell|$ denotes the length of the chord $\Omega \cap \ell$, and the integration is with respect to Haar measure on $\pounds^{n}$ as normalized above. As can be seen in \cite[p. 5]{LXYZ}, the formula \eqref{chord} includes the convex body's surface area and volume as two important cases when the index $q=0$ and $q=1$ or $q=n+1$ respectively.


As showed in \cite{R94,S04}, for $q>1$, from the perspective of analysis, there is an amazing link between the chord integrals and the Riesz potentials of the characteristic function of the convex body $\Omega$,
\begin{equation}\label{q1} I_{q}(\Omega)=\frac{q(q-1)}{n\omega_{n}}\int_{\rnnn}\int_{\rnnn}\frac{\chi_{\Omega}(\vartheta)\chi_{\Omega}(\zeta)}{|\vartheta-\zeta|^{n+1-q}}d\vartheta d\zeta, \quad  q>1.
\end{equation}

Now we turn to talk about  chord measures, which stem from the differential of the chord integrals. As  revealed by \cite[Lemma 5.6]{LXYZ}, for $q>0$, the variational formula of chord integrals is given by
 \begin{equation}\label{IQR}
  \frac{d}{dt}\Big|_{t=0}I_{q}(\Omega+tL)=\int_{\sn}h_{L}(v)dF_{q}(\Omega,v),
   \end{equation}
 where $F_{q}(\Omega,\cdot)$ is the chord measure of $\Omega$, defined by
 \[
  F_{q}(\Omega,\eta)=\frac{2q}{\omega_{n}}\int_{\nu^{-1}_{\Omega}(\eta)}\widetilde{V}_{q-1}(\Omega,z)d \mathcal{H}^{n-1}(z), \quad {\rm Borel}\ \eta\subset \sn,
  \]
  here $\widetilde{V}_{q-1}(\Omega,\cdot)$ is a nonlocal term, more precisely, a Riesz potential defined on the boundary of convex body,  and is also deemed as the $q$-th dual quermassintegral of $\Omega$ (see Sec. \ref{Sec2} for its definition, we omit it here).
  Chord measures shall be served as the fourth major family of geometric measures of convex bodies that together with Aleksandrov-Fenchel-Jessen's area measures, Federer's curvature measure and the dual curvature measures introduced by Huang-LYZ \cite{HLYZ16}.
 Furthermore, for $q\geq0$ and $p\in \R$, Lutwak-Xi-Yang-Zhang \cite{LXYZ} also defined the $L_{p}$ chord measure $F_{p,q}(\Omega,\cdot)$ of $\Omega$ as
 \[
 dF_{p,q}(\Omega,\cdot)=h_{\Omega}(\cdot)^{1-p}dF_{q}(\Omega,\cdot).
  \]

Along above routes, we arrive the entrance in exploring the geometric problems. In \cite{LXYZ}, the authors introduced the $L_{p}$ chord Minkowski problem as below.

{\bf The $L_{p}$ Chord Minkowski Problem.} Suppose $q\geq 0$ and $p\in \R$, given a finite Borel measure $\mu$ on the unit sphere $\sn$, what are the necessary and sufficient conditions on $\mu$ to ensure the existence of a convex body $\Omega\subset \rn$ such that
\begin{equation}\label{chm}
\mu=F_{p,q}(\Omega,\cdot)?
\end{equation}
The authors \cite{XYZ23} gave the existence of the solution of \eqref{chm} by applying variational arguments  for $p>1$ and $0<p<1$ under the symmetric assumption. Very recently, Guo-Xi-Zhao \cite{GXZ23} solved the $L_{p}$ chord Minkowski problem in the case of $0\leq p<1$ without any symmetric assumption, and Li \cite{Li23} treated the discrete $L_{p}$ chord Minkowski problem in the condition of $p<0$ and $q>0$, as for general Borel measure, Li also gave a proof but need $p\in (-n,0)$ and $1<q<n+1$. Note that, in view of \eqref{chm}, this problem becomes the $L_{p}$ Minkowski problem characterized in the case $q=1$ or $q=n+1$, when $p=0$ and $q=1$ or $q=n+1$, it is the classical log-Minkowski problem, which was attacked by B\"{o}r\"{o}czky-LYZ \cite{BLYZ12} in the circumstance that $\mu$ is an even measure, and it becomes to the chord Minkowski problem with $p=0$ and the chord log-Minkowski problem with $p=1$ studied by \cite{LXYZ} from the perspective of variational theory. Very lately, the regularity of the solution to the chord log-Minkowski problem was studied by Hu-Huang-Lu \cite{HJ23} from the point view of the elliptic equation and parabolic equation of theories.

Purely as an aside, when the given measure $\mu$ has a positive density, say $\frac{2q}{\omega_{n}}f$, the solvability of the geometric problem \eqref{chm} amounts to tackling the following equation of
 {M}onge-{A}mp\`ere type involving dual quermassintegrals of function,
 \begin{equation}\label{Mong}
h^{1-p}\det(\nabla^{2}h+hI)\widetilde{V}_{q-1}(h,\overline{\nabla} h)=f,\quad {\rm on} \ \sn,
\end{equation}
where $h:\sn \rightarrow (0,+\infty)$ is the unknown function, $\overline{\nabla} h$ denotes the Euclidean gradient of $h$ in $\rnnn$, and $\nabla^{2}h$ is the spherical Hessian of $h$ with respect to a local orthonormal frame on $\sn$, $I$ is the identity matrix.

Illuminated by the author's work \cite{HJ23}, the purpose of current paper is to attack the solvability of equation \eqref{Mong} with the aid of the geometric flow. The geometric flow involving Gauss curvature was first introduced by Firey \cite{F74}, it described shapes of worn stones. Since then, a lot of variants of Gauss curvature flow have sprung up, for example, Chou-Wang \cite{CW00} made use of a logarithmic Gauss curvature flow to deal with the classical Minkowski problem, Li-Sheng-Wang \cite{LSW20} took advantage of the related Gauss curvature flow to deal with the Aleksandrov problem and dual Minkowski problem, and Chen-Huang-Zhao \cite{CHZ19} gave smooth solutions to the $L_{p}$ dual Minkowski problem and see their references. For our purpose, different from previous Gaussian curvature flows, we are concerned with a family of convex hypersurfaces $\partial \Omega_{t}$ parameterized by smooth map $X(\cdot,t):\sn\rightarrow \rnnn$ satisfying the following flow equation involving a nonlocal term bearing on the regularity of the boundary of hypersurface,
\begin{equation}\label{GC}
\left\{
\begin{array}{lr}
\frac{\partial X(x,t)}{\partial t}=-f\kappa (X\cdot v)^{p} \frac{\eta(t)}{\int_{\sn}fh^{p}dx}\frac{1}{\widetilde{V}_{q-1}(\Omega_{t},\overline{\nabla} (X\cdot v))} v+X(x,t), \\
X(x,0)=X_{0}(x),
\end{array}\right.
\end{equation}
where $\eta(t)= \frac{\omega_{n}(n+q-1)I_{q}(\Omega_{t})}{2q}$, $\kappa={\rm det}(\nabla^{2}h+hI)^{-1}$ is the Gauss curvature of the hypersurface $\partial \Omega_{t}$ at $X(\cdot,t)$, and $v=x$ is the unit outer normal vector of $\partial\Omega_{t}$ at $X(\cdot,t)$. It should be again remarked that the flow \eqref{GC} was first introduced by Hu-Huang-Lu \cite{HJ23} to get the regularity results of the chord log-Minkowski problem in the context of $p=0$.

We are in the place to elaborate that the main aim is to verify that the solution $h(x,t)$ of the flow \eqref{GC} exists for all time $t\in(0,+\infty)$, and it subconverges to a function
$h(x)$  as $t$ goes to infinity, which is $\left[\frac{\omega_{n}(n+q-1)I_{q}(\Omega)}{2q\int_{\sn}fh^{p}{dx}}\right]^{\frac{1}{n+q-1-p}}$ times the solution of \eqref{Mong}.
As expounded by \cite{HJ23}, the biggest obstacle in the solving process spring up brought by the Riesz potential $\widetilde{V}_{q-1}$ with respect to boundary point, which is more delicate than that  with respect to interior point.  Definitely, the authors \cite{HJ23} put forward a new technique to deal with the above troublesome and obtained the smooth estimation of the Riesz potential on the boundary. Based on their work, the buck of this paper is to derive the following result.
\begin{theo}\label{main}
Suppose $\{p>0, q>3\}\cup \{-n<p<0, 3<q<n+1\}$.  Let $\Omega_{0}$ be a smooth, origin-symmetric and strictly convex body in $\rnnn$. Let $f$ be a positive, even and smooth function on $\sn$. Then
there exists a smooth, origin-symmetric and strictly convex solution $\Omega_{t}$ to
flow \eqref{GC} for all $t>0$   and  its subsequence in $C^{\infty}$ converges to a smooth, origin
symmetric, strictly convex solution to equation \eqref{Mong}.
\end{theo}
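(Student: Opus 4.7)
The plan is to reformulate the flow \eqref{GC} as a parabolic PDE for the support function $h(x,t)$ of $\Omega_t$, establish uniform a priori $C^{\infty}$ estimates under the stated hypotheses, and extract a convergent subsequence as $t\to\infty$. Since $v=x$ is the outer unit normal and $X\cdot v=h$, pairing \eqref{GC} with $v$ gives
\begin{equation*}
\partial_t h(x,t) = -\frac{\eta(t)\,f(x)\,h^{p}}{\bigl(\int_{\sn}f h^{p}\,dx\bigr)\,\widetilde{V}_{q-1}(\Omega_t,\overline{\nabla}h)\,\det(\nabla^{2} h+h I)} + h,
\end{equation*}
and standard parabolic theory yields short-time existence so long as $h>0$ and $\nabla^{2}h+hI>0$. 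Everything therefore reduces to proving uniform a priori estimates.

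The normalization $\eta(t)$ is chosen so that $I_q(\Omega_t)\equiv I_q(\Omega_0)$ is conserved: inserting $\partial_t h$ into $\frac{d}{dt}I_q=\int h_t\,dF_q(\Omega_t,\cdot)$ coming from \eqref{IQR} and using the Euler-type identity $\int_{\sn}h\,dF_q(\Omega,\cdot)=(n+q-1)I_q(\Omega)$ (a consequence of the homogeneity $I_q(\lambda\Omega)=\lambda^{n+q-1}I_q(\Omega)$) makes the two contributions cancel. For a Lyapunov functional I would track $\Phi(t)=\frac{1}{p}\log\int_{\sn}f h^{p}\,dx$; a direct computation combined with H\"older's inequality and the conservation of $I_q$ should yield $\Phi'(t)\le 0$ when $p>0$ and $\Phi'(t)\ge 0$ when $-n<p<0$, with equality exactly at stationary points. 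This pins $\int f h^{p}\,dx$ between two positive constants. Evaluating the PDE at the spatial maximum and minimum of $h(\cdot,t)$---where $\nabla^{2}h$ has a sign---together with origin-symmetry and the evenness of $f$ (ruling out degeneration to a lower-dimensional set) and the boundary Riesz-potential estimates of \cite{HJ23}, should then produce uniform bounds $c_0\le h(\cdot,t)\le C_0$. The restrictions $q>3$ and $p>-n$ enter precisely to secure integrability and the required sign conditions in these arguments.

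Once $h$ is pinched between two positive constants, $\widetilde{V}_{q-1}$ and its spherical derivatives are uniformly controlled via \cite{HJ23}, so the nonlocal factor becomes a uniformly positive smooth coefficient. I would then run the standard Gauss-curvature $C^{2}$ estimate: an upper bound on the principal radii is obtained by applying the maximum principle to an auxiliary quantity of the form $\log\det(\nabla^{2}h+hI)-Ah^{2}$, and a matching lower bound on the Gauss curvature from a dual auxiliary function. With the equation uniformly parabolic and H\"older continuous, Krylov-Safonov and Schauder estimates propagate these bounds to $C^{k,\alpha}$ for every $k$, whence long-time existence follows by continuation. The monotonicity of $\Phi$ forces $\int_0^{\infty}|\Phi'(t)|\,dt<\infty$, so along some sequence $t_k\to\infty$ one has $\Phi'(t_k)\to 0$. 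Extracting a $C^{\infty}$-subsequential limit $h_\infty$ by Arzel\`a-Ascoli, the equality case in the underlying H\"older inequality forces $h_\infty^{1-p}\det(\nabla^{2}h_\infty+h_\infty I)\widetilde{V}_{q-1}(\Omega_\infty,\overline{\nabla}h_\infty)=\lambda f$ for some $\lambda>0$; rescaling $h_\infty$ by $\lambda^{1/(n+q-1-p)}$ yields the desired solution of \eqref{Mong}.

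The main obstacle is the $C^{2}$ estimate, because $\widetilde{V}_{q-1}(\Omega_t,\overline{\nabla}h(x,t))$ is nonlocal: differentiating it in $x$ produces a boundary integral that is not controlled by local Hessian data. Following the technique introduced in \cite{HJ23}, one would absorb the derivatives of $\widetilde{V}_{q-1}$ as lower-order perturbations once the $C^{0}$ and $C^{1}$ bounds are in hand, using the smoothness estimates proved there for the Riesz potential on the boundary. The assumption $q>3$ guarantees enough regularity of $\widetilde{V}_{q-1}$ up to the boundary, while the range $p\in(-n,0)\cup(0,\infty)$ is exactly what closes up both the Lyapunov monotonicity and the second-order estimate.
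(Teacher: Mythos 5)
Your overall strategy matches the paper's: reformulate the flow as a scalar parabolic equation for the support function, prove that the normalization conserves $I_q$, introduce an entropy functional, obtain $C^0$--$C^2$ a priori estimates, run Krylov--Safonov/Schauder to get $C^{k,\alpha}$ bounds, and extract a subsequential limit along times where the dissipation vanishes. However, several of the intermediate claims are wrong or materially incomplete.

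First, the sign of the Lyapunov dissipation is misstated. The paper's functional is $J(\Omega_t)=-\tfrac{1}{n+q-1}\log I_q(\Omega_t)+\tfrac{1}{p}\log\int_{\sn}f h^p\,dx$, which equals your $\Phi(t)$ plus a constant once $I_q$ is conserved. The direct computation in the paper shows $J'(t)$ is minus the integral of a square divided by an everywhere--positive quantity, so $J'(t)\le 0$ for \emph{every} $p\neq 0$, not $J'\le 0$ for $p>0$ and $J'\ge 0$ for $p<0$ as you claim. In fact it is precisely $J'\le 0$ with $p<0$ that produces the needed \emph{lower} bound $\int_{\sn}fh^p\,dx\ge C^*>0$; your sign would give an upper bound, which is useless in that case. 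Moreover, the monotonicity alone does not ``pin $\int f h^p\,dx$ between two positive constants'' -- it gives a one--sided bound, and the rest of the $C^0$ estimate has to be earned by other means. There is also no H\"older inequality underlying the monotonicity, so ``equality in H\"older'' is not what forces the limit to solve the equation; rather, $J'(t_k)\to 0$ combined with the explicit negative--square form of $J'$ forces the flow speed to vanish pointwise in the limit.

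Second, your route to the $C^0$ estimate — evaluating the PDE at the spatial max and min of $h(\cdot,t)$ — is not what is done and is unlikely to close. Because of the time--dependent normalization $\eta(t)/\int f h^p\,dx$ and the nonlocal factor $\widetilde V_{q-1}$, a pointwise maximum--principle estimate at extrema of $h$ does not immediately produce uniform bounds. The paper's actual mechanism is quite different. For $p>0$ the bound $\tfrac1p\log\int f h^p\,dx\le C$ is combined with origin symmetry ($h(x,t)\ge\rho_{\max}(t)|x\cdot e_1|$) to bound $\rho_{\max}$ directly. For $-n<p<0$ one needs the lower bound $\int f h^p\,dx\ge C^*$, the conservation of $I_q$, the inequality $I_q(\Omega_t)\le C\,\mathrm{Vol}(\Omega_t)^{(n+q-1)/n}$ for $1<q<n+1$ (giving a volume lower bound), John's ellipsoid, a three--set decomposition of $\sn$, and the Blaschke--Santal\'o inequality to reach a contradiction with unboundedness. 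The lower bound on $h$ comes from the Blaschke selection theorem plus continuity of $I_q$: if $\min h\to 0$ the limiting body is degenerate so $I_q=0$, contradicting conservation; origin symmetry and evenness of $f$ play no role there. You mention none of the John/volume/Blaschke machinery, which is exactly where the restriction $1<q<n+1$ in the negative-$p$ case enters.

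Finally, your auxiliary functions for the $C^2$ estimate are plausible but not the ones used: the paper bounds the Gauss curvature above via $Q=-h_t/(h-\varepsilon_0)$, and bounds it below (controls the largest principal radius) via $E=\log(\mathrm{tr}\,b)-\tilde d\log h+\tilde l|\nabla h|^2$, in both cases leaning on the smooth boundary estimates for $\widetilde V_{q-1}$ and its first two covariant derivatives from \cite{HJ23}; this is where the hypothesis $q>3$ is actually needed to control $\int_{\Omega_t}|y-z|^{q-n-3}\,dy$.
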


The organization of this paper is as follows. In Sec. \ref{Sec2}, we make some preliminaries on convex body and integral geometry. In Sec. \ref{Sec3}, we establish the related nonlocal Gauss curvature flow and the functional. In Sec. \ref{Sec4}, we derive $C^{0}$, $C^{1}$ estimates.  In Sec. \ref{Sec5}, the $C^{2}$ estimate is established. In Sec. \ref{Sec6}, we complete the main theorem.

\section{Preliminaries}
 \label{Sec2}
We will work in the $n$-Euclidean space ${\rnnn}$. Denote by ${\sn}$ the unit sphere. A convex body is a compact convex set of ${\rnnn}$ with non-empty interior. $\omega_{n}$ is the volume of unit ball in $\rnnn$. For $y,z\in {\rnnn}$, $y\cdot z$ denotes the standard inner product.   For quick and standard references on convex body, please see the monographs of Gardner \cite{G06} and Schneider \cite{S14}. In what follows, we shall give some basics on Riesz potentials and chord integrals, for more details, please refer to \cite{LXYZ}.

{\bf Radial and Support function.}  For a convex body $\Omega$ in $\rnnn$ and $z\in \rnnn$, set
\[
S_{z}=S_{z}(\Omega)=\{u\in \sn :\Omega\cap(z+\R u)\neq \emptyset\}.
\]
The (extended) radial function $\rho_{\Omega,z}(x):\rnnn \backslash \{o\}\rightarrow \R$ regarding to $z\in \rnnn$, is defined by
\begin{equation*}\label{p1}
\rho_{\Omega,z}(y)=\max\{\lambda\in\R:\lambda y\in \Omega-z\},\quad y\in \rnnn \backslash \{0\}.
\end{equation*}
By above definition, it is clear to observe that
\begin{equation}\label{p2}
\rho_{\Omega,y+z}=\rho_{\Omega-y,z}=\rho_{\Omega-y-z}, \quad y,z\in \rnnn.
\end{equation}
 On the other hand, the (extended) support function $h_{\Omega,z}:\rnnn\rightarrow \R$ of $\Omega$ with respect to $z$, is defined by
\begin{equation*}\label{h1}
h_{\Omega,z}(x)=\max \{x\cdot y:y \in \Omega-z\}, \quad x\in \rnnn.
\end{equation*}
 Obviously,
\begin{equation}\label{h2}
h_{\Omega,y+z}(x)=h_{\Omega-y,z}=h_{\Omega}(x)-(y+z)\cdot x, \quad x,y,z\in \rnnn.
\end{equation}
For the support function and radial function of $\Omega$ with respect to the origin, we shall write $h_{\Omega}$, $\rho_{\Omega}$ rather than $h_{\Omega,o}$, $\rho_{\Omega,o}$.

The Hausdorff metric $\mathcal{D}(\Omega,\widetilde{\Omega})$ between two convex bodies $\Omega$ and $\widetilde{\Omega}$ in ${\rnnn}$, is expressed as
\[
\mathcal{D}(\Omega,\widetilde{\Omega})=\max\left\{|h_{\Omega}(x)-h_{\widetilde{\Omega}}(x)|:x\in {\sn}\right\}.
\]
Let $\{\Omega_{j}\}$ be a sequence of convex bodies in ${\rnnn}$. For a convex body $\Omega_{0}$ in ${\rnnn}$, if $\mathcal{D}(\Omega_{j},\Omega_{0})\rightarrow 0$, then $\Omega_{j} \rightarrow \Omega_{0}$, or equivalently, $\max_{x\in{\sn}}|h_{\Omega_{j}}(x)-h_{\Omega_{0}}(x)|\rightarrow0$, as $j\rightarrow \infty$.

The map $\nu_{\Omega}:\partial \Omega\rightarrow {\sn}$ denotes the Gauss map of $\partial\Omega$, which is related to an element of $\partial \Omega$ with its outer unit normal. Meanwhile, for $\omega\subset {\sn}$, the inverse Gauss map is expressed as
\begin{equation*}
\nu^{-1}_{\Omega}(\omega)=\{X\in \partial \Omega: \nu_{\Omega}(X) {\rm  \ is \ defined \ and }\ \nu_{\Omega}(X)\in \omega\}.
\end{equation*}
For simplicity in the subsequence, we abbreviate $\nu^{-1}_{\Omega}$ as $F$. In particular, for a convex body $\Omega$ being of class $C^{2}_{+}$, i.e., its boundary is of class $C^{2}$ and of positive Gauss curvature, the support function of $\Omega$ can be written as
\begin{equation}\label{hhom}
h_{\Omega}(x)=x\cdot F(x)=\nu_{\Omega}(X)\cdot X, \ {\rm where} \ x\in {\sn}, \ \nu_{\Omega}(X)=x \ {\rm and} \ X\in \partial \Omega.
\end{equation}
 Let $\{e_{1},e_{2},\ldots, e_{n-1}\}$ be a local orthonormal frame on ${\sn}$, $h_{i}$ be the first order covariant derivatives of $h(\Omega,\cdot)$ on ${\sn}$ with respect to the frame. Differentiating \eqref{hhom} with respect to $e_{i}$ , we get
\[
h_{i}=e_{i}\cdot F(x)+x\cdot F_{i}(x).
\]
Since $F_{i}$ is tangent to $ \partial \Omega$ at $F(x)$, we obtain
\begin{equation}\label{Fi}
h_{i}=e_{i}\cdot F(x).
\end{equation}
Combining \eqref{hhom} and \eqref{Fi}, we have (see also \cite[p. 97]{U91})
\begin{equation}\label{Fdef}
F(x)=\sum_{i} h_{i}e_{i}+h_{\Omega}(x)x=\nabla h_{\Omega}(x)+h_{\Omega}(x)x.
\end{equation}
 On the other hand, since we can extend $h_{\Omega}(x)$ to $\rnnn$ as a 1-homogeneous function $h_{\Omega}(\cdot)$, restricting the gradient of $h_{\Omega}(\cdot)$ on $\sn$, it yields that (see for example \cite[p. 14-16]{Guan}, \cite{CY76})
\begin{equation}\label{hf}
\overline{\nabla} h_{\Omega}(x)=F(x), \ \forall x\in{\sn},
\end{equation}
where $\overline{\nabla}$ is the gradient operator in $\rnnn$. Let $h_{ij}$ be the second order covariant derivatives of $h_{\Omega}(\cdot)$ on ${\sn}$ with respect to the local frame. Then, applying \eqref{Fdef} and \eqref{hf}, we have (see, e.g., \cite[p. 382]{J91})
\begin{equation}\label{hgra}
\overline{\nabla }h_{\Omega}(x)=\sum_{i}h_{i}e_{i}+hx, \quad F_{i}(x)=\sum_{j}(h_{ij}+h\delta_{ij})e_{j},\quad F_{ij}(x)=\sum_{k}(h_{ijk}+h_{k}\delta_{ij})e_{k}-(h_{ij}+h\delta_{ij})x.
\end{equation}
Then, the Gauss curvature of $\partial\Omega$, $\kappa$, i.e., the Jacobian determinant of Gauss map of $\partial\Omega$, is revealed as
\begin{equation*}
\kappa=\frac{1}{\det(h_{ij}+h\delta_{ij})}.
\end{equation*}

{\bf Riesz potentials and chord integrals.} As presented in the introduction, for a convex body $\Omega$ in $\rnnn$, as a production of \eqref{chord}, the chord integral $I_{q}(\Omega)$ shall be shown, by $X$-rays (see \cite{G06}), as
\begin{equation}\label{I1}
I_{q}(\Omega)=\frac{1}{n\omega_{n}}\int_{\sn}\int_{\Omega|u^{\bot}}X_{\Omega}(\vartheta,u)^{q}d\vartheta du,\quad q> -1,
\end{equation}
where $\Omega|u^{\bot}$ is the image of the orthogonal projection of $\Omega$ onto $u^{\bot}$, and the parallel $X$-ray of $\Omega$ is defined as
\begin{equation*}\label{}
X_{\Omega}(z,u)=|\Omega \cap (z+\rn u)|, \quad z \in \rnnn, \ u\in \sn.
\end{equation*}
Note that the relation between the $X$-ray function and the radial function is revealed as
\begin{equation}\label{xray}
X_{\Omega}(\vartheta,u)=\rho_{\Omega,z}(u)+\rho_{\Omega,z}(-u), \ {\rm  when} \ \Omega \cap (\vartheta+\rn u)=\Omega \cap (z+\rn u)\neq\emptyset.
\end{equation}
Obviously, in the light of \eqref{xray}, when $z\in \partial \Omega$, then either $\rho_{\Omega,z}(u)=0$ or $\rho_{\Omega,z}(-u)=0$ for almost all $u\in \sn$. So
\[
X_{\Omega}(z,u)=\rho_{\Omega,z}(u),\quad {\rm or} \quad X_{\Omega}(z,u)=\rho_{\Omega,z}(-u),\quad z\in \partial \Omega,
\]
for almost all $u\in \sn$.

An important property of $I_{q}$ is that it is homogeneous of degree $(n+q-1)$ for $q>-1$(see \cite[Lemma 3.2]{LXYZ}). Furthermore, it should be noticed that for $q>1$, the relationship between chord integrals and the Riesz potentials of characteristic functions of convex bodies is give by (see \cite{R94,S04})
\begin{equation}\label{inte}
I_{q}(\Omega)=\frac{q(q-1)}{n\omega_{n}}\int_{\rnnn}\int_{\rnnn}\frac{\chi_{\Omega}(\vartheta)\chi_{\Omega}(\zeta)}{|\vartheta-\zeta|^{n+1-q}}d\vartheta d\zeta, \quad {\rm real } \ q>1,
\end{equation}
where $\chi_{\Omega}(\cdot)$ is the characteristic function of $\Omega$.

{\bf Dual Quermassintegrals.}
For $q\in \R$,  define the $q$-th dual quermassintegral $\widetilde{V}_{q}(\Omega,z)$ of $\Omega$ with respect to $z\in \Omega$, by
\begin{equation}\label{VQ}
\widetilde{V}_{q}(\Omega,z)=\widetilde{V}^{+}_{q}(\Omega,z).
\end{equation}
On the other hand, for $q>0$, and $z\notin \Omega$, define $\widetilde{V}_{q}(\Omega,z)$ by
\[
\widetilde{V}_{q}(\Omega,z)=\widetilde{V}^{+}_{q}(\Omega,z)-\widetilde{V}^{-}_{q}(\Omega,z).
\]
Here
\[
\widetilde{V}^{+}_{q}(\Omega,z)=\frac{1}{n}\int_{S^{+}_{z}}\rho_{\Omega,z}(u)^{q}du,\quad \widetilde{V}^{-}_{q}(\Omega,z)=\frac{1}{n}\int_{S^{-}_{z}}|\rho_{\Omega,z}(u)|^{q}du,
\]
where
\[
S^{+}_{z}=\{u\in \sn: \ \rho_{\Omega,z}(u)>0\},\quad S^{-}_{z}=\{u\in \sn: \ \rho_{\Omega,z}(u)<0\}.
\]
Since $S^{+}_{z}=\sn$ when $z\in {\rm int} \ \Omega$ and $\rho_{\Omega,z}(u)=0$ when $z\in \partial \Omega$ and $u$ is in the interior of $\sn \backslash S^{+}_{z}$, we obtain
\[
\widetilde{V}_{q}(\Omega,z)=\frac{1}{n}\int_{\sn}\rho_{\Omega,z}(u)^{q}du,
\]
when either $q\in \R$ and $z\in {\rm int} \ \Omega$, or $q>0$ and $z\in \partial \Omega$.

 We are in the place to get the Riesz potential form of the dual quermassintegrals $\widetilde{V}_{q}(\cdot,z)$ with respect to $z\in \partial \Omega$. More precisely, for $q>0$, via the transformation of the polar coordinate, one sees
\begin{equation}\label{cj*}
\widetilde{V}_{q}(\Omega,z)=
\frac{q}{n}\int_{\Omega}\frac{1}{|y-z|^{n-q}}dy.
\end{equation}

{\bf Chord measures and $L_{p}$ chord measures.} As shown in \cite{LXYZ}, the chord measure $F_{q}(\Omega,\cdot)$ is given by
\[
F_{q}(\Omega,\eta)=\frac{2q}{\omega_{n}}\int_{\nu^{-1}_{\Omega}(\eta)}\widetilde{V}_{q-1}(\Omega,z)d \mathcal{H}^{n-1}(z), \quad {\rm Borel}\ \eta\subset \sn.
\]
For each $p\in \R$, the $L_{p}$ chord measure $F_{p,q}(\Omega,\cdot)$ is defined as follows:
 \[
dF_{p,q}(\Omega,\eta)=h_{\Omega}(\cdot)^{1-p}dF_{q}(\Omega,\eta), \quad {\rm Borel}\ \eta\subset \sn.
\]

It should be stressed that, as given in \cite[Lemma 3.3]{LXYZ}, for $q>0$, the expression of $I_{q}(\Omega)$ can also be written as
\begin{equation}\label{IQ}
I_{q}(\Omega)=\frac{q}{\omega_{n}}\int_{\Omega}\widetilde{V}_{q-1}(\Omega,z)dz.
\end{equation}


\section{The flow equation and related functional}
\label{Sec3}
As we presented in the introduction, if a given finite Borel measure has a positive density $\frac{2q}{\omega_{n}}f(x)$ on the unit sphere $\sn$, the existence of the $L_{p}$ chord Minkowski problem  is equivalent to solving the following new {M}onge-{A}mp\`ere equation,
\begin{equation}\label{Exil**}
h^{1-p}\det(h_{ij}+h\delta_{ij})\widetilde{V}_{q-1}(h,\overline{\nabla } h)=f(x), \quad  \forall x \in {\sn}.
\end{equation}
To derive the solvability of \eqref{Exil**}, we capitalize on the related flow equation. Suppose $\Omega_{0}$ is a smooth, origin-symmetric and strictly convex body in $\rnnn$, as presented above, we focus on a family of convex hypersurfaces $\partial \Omega_{t}$ parameterized by smooth map $X(\cdot,t):\sn\rightarrow \rnnn$ satisfying the following flow equation,
\begin{equation}\label{GCF2}
\left\{
\begin{array}{lr}
\frac{\partial X(x,t)}{\partial t}=-f\kappa(X\cdot v)^{p} \frac{\eta(t)}{\int_{\sn}fh^{p}dx}\frac{1}{\widetilde{V}_{q-1}(\Omega_{t},\overline{\nabla } (X\cdot v))} v+X(x,t), \\
X(x,0)=X_{0}(x),
\end{array}\right.
\end{equation}
where $\eta(t)=\frac{\omega_{n}(n+q-1)I_{q}(\Omega_{t})}{2q}$. Multiplying both sides of \eqref{GCF2} by $v$, with the aid of the definition of support function, the flow equation associated with the support function $h(x,t)$ of $\Omega_{t}$ is given by
\begin{equation}\label{GCF22}
\left\{
\begin{array}{lr}
\frac{\partial h(x,t)}{\partial t}=-f\kappa h^{p} \frac{\eta(t)}{\int_{\sn}fh^{p}dx}\frac{1}{\widetilde{V}_{q-1}(\Omega_{t},\overline{\nabla} h)} +h(x,t), \\
h(x,0)=h_{0}(x).
\end{array}\right.
\end{equation}

Remark that the short time existence for \eqref{GCF22} follows along similar line as \cite[Lemma 4.1]{HJ23}.

Now, establishing the functional associated with the flow as
\begin{equation}\label{func2}
J(\Omega_{t})=-\frac{1}{n+q-1}\log I_{q}(\Omega_{t})+\frac{1}{p}\log \int_{\sn}f(x)h(x,t)^{p}dx.
\end{equation}
Under the flow, we have the monotonicity of $I_{q}(\Omega_{t})$ and $J(\Omega_{t})$, which are showed as follows.

\begin{lem}\label{monJ1}
Assume that $\Omega_{t}$ is a smooth, origin-symmetric and strictly convex solution satisfying the flow \eqref{GCF2} in $\rnnn$. Then, $I_{q}(\Omega_{t})$ is a constant along the flow \eqref{GCF2} for $q>0$.
\end{lem}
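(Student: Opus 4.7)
The plan is to differentiate $I_{q}(\Omega_{t})$ in time using the variational formula \eqref{IQR} for chord integrals, substitute the evolution equation \eqref{GCF22} for $\partial_{t}h$, and show that the two resulting terms cancel identically.

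First I would record that, since $\Omega_{t}$ is smooth and strictly convex, the chord measure has the density
\[
dF_{q}(\Omega_{t},x)=\frac{2q}{\omega_{n}}\,\widetilde{V}_{q-1}(\Omega_{t},\overline{\nabla}h(x,t))\,\frac{1}{\kappa(x,t)}\,dx
\]
on $\sn$, because $F=\nu_{\Omega_{t}}^{-1}$ pushes the area element $d\mathcal H^{n-1}$ on $\partial\Omega_{t}$ to $\kappa^{-1}\,dx$ on $\sn$, and $F(x)=\overline{\nabla}h(x,t)$ by \eqref{hf}. Coupling this with \eqref{IQR} applied to the one-parameter family $\Omega_{t}$ (whose normal speed is $\partial_{t}h$) gives
\[
\frac{d}{dt}I_{q}(\Omega_{t})=\int_{\sn}\frac{\partial h}{\partial t}\,dF_{q}(\Omega_{t},x)
=\frac{2q}{\omega_{n}}\int_{\sn}\frac{\partial h}{\partial t}\,\widetilde{V}_{q-1}(\Omega_{t},\overline{\nabla}h)\,\frac{1}{\kappa}\,dx.
\]

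Next I would plug in $\partial_{t}h$ from \eqref{GCF22}. The first piece $-f\kappa h^{p}\,\frac{\eta(t)}{\int_{\sn}fh^{p}dx}\,\widetilde{V}_{q-1}^{-1}$ collapses beautifully: the factors $\kappa$, $\widetilde{V}_{q-1}$, and $f h^{p}$ cancel with their reciprocals in the integrand, leaving
\[
-\frac{2q\,\eta(t)}{\omega_{n}\int_{\sn}fh^{p}dx}\int_{\sn}fh^{p}\,dx=-\frac{2q\,\eta(t)}{\omega_{n}}=-(n+q-1)I_{q}(\Omega_{t}),
\]
by the definition of $\eta(t)$. The second piece contributes $\int_{\sn}h\,dF_{q}(\Omega_{t},x)$. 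To evaluate it I would appeal to \eqref{IQR} with $L=\Omega_{t}$ together with the $(n+q-1)$-homogeneity of $I_{q}$ (recorded just after \eqref{inte}): since $I_{q}((1+s)\Omega_{t})=(1+s)^{n+q-1}I_{q}(\Omega_{t})$, differentiating at $s=0$ yields
\[
\int_{\sn}h(x,t)\,dF_{q}(\Omega_{t},x)=(n+q-1)\,I_{q}(\Omega_{t}).
\]
Adding the two contributions gives $\frac{d}{dt}I_{q}(\Omega_{t})=0$, as desired.

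There is no real analytic obstacle here; the only delicate points are bookkeeping—namely, verifying that the variational identity \eqref{IQR} may be applied to $\Omega_{t}$ with the normal velocity $\partial_{t}h$ (which is fine because $h(\cdot,t)$ is smooth during the short-time existence regime), and checking that $\overline{\nabla}h(x,t)\in\partial\Omega_{t}$ so that $\widetilde{V}_{q-1}$ on the boundary is the one defined in \eqref{cj*}. Once these identifications are in place, the conclusion is a direct consequence of the homogeneity of $I_{q}$ and the tailored choice of the normalization $\eta(t)$ built into the flow \eqref{GCF2}.
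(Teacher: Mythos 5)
Your proposal is correct and follows essentially the same route as the paper: differentiate $I_q(\Omega_t)$ via the variational formula, substitute the flow speed, and observe that the two resulting terms both equal $\pm(n+q-1)I_q(\Omega_t)$ and so cancel. The one point where you differ cosmetically is the justification of the Euler-type identity $\int_{\sn}h\,dF_q(\Omega_t,\cdot)=(n+q-1)I_q(\Omega_t)$: you rederive it from the $(n+q-1)$-homogeneity of $I_q$ via \eqref{IQR}, whereas the paper simply invokes \cite[Lemma 4.3]{LXYZ}, which is that same identity. Both are equally valid; your derivation just makes the homogeneity argument explicit rather than citing it.
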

\begin{proof}
Observe that \cite[Lemma 5.5]{LXYZ}, based on which, by a direct computation and utilizing \cite[Lemma 4.3]{LXYZ}, then we derive
\begin{equation*}
\begin{split}
\label{}
\frac{d}{dt}I_{q}(\Omega_{t})&=\int_{\sn}\frac{\partial h}{\partial t}\frac{2q}{\omega_{n}}\widetilde{V}_{q-1}(\Omega_{t},\overline{\nabla} h)\frac{1}{\kappa}{dx}\\
&=\int_{\sn}\left[-\frac{f\kappa h^{p} \frac{\omega_{n}}{2q}(n+q-1)I_{q}(\Omega_{t})\frac{1}{\widetilde{V}_{q-1}(\Omega_{t},\overline{\nabla} h)}}{\int_{\sn}fh^{p}dx}+h\right]\frac{2q}{\omega_{n}}\widetilde{V}_{q-1}(\Omega_{t},\overline{\nabla} h)\frac{1}{\kappa}dx\\
&=0.
\end{split}
\end{equation*}
This reveals that lemma \ref{monJ1} holds.
\end{proof}

\begin{lem}\label{monJ2}
Assume that $\Omega_{t}$ is a smooth, origin-symmetric and strictly convex solution satisfying the flow \eqref{GCF2} in $\rnnn$. Then, $J(t)$ is nonincreasing  along the flow \eqref{GCF2}, i.e.,
 \[
 \frac{d}{dt}J(t)\leq 0.
 \]
\end{lem}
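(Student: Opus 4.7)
The plan is to differentiate $J(t)$ directly, use Lemma \ref{monJ1} to kill the $I_q$-term, substitute the flow equation into the remaining piece, and reduce the required sign to a Cauchy--Schwarz inequality.

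First I would compute
\[
\frac{d}{dt}J(t)\;=\;-\frac{1}{n+q-1}\cdot\frac{\frac{d}{dt}I_q(\Omega_t)}{I_q(\Omega_t)}\;+\;\frac{1}{p}\cdot\frac{p\int_{\sn} f\,h^{p-1}\,\partial_t h\,dx}{\int_{\sn} f\,h^p\,dx}.
\]
By Lemma \ref{monJ1} the first summand vanishes, and the two factors of $p$ in the second summand cancel, so the sign is independent of the sign of $p$ and the claim reduces to
\[
\int_{\sn} f\,h^{p-1}\,\partial_t h\,dx \;\le\; 0,
\]
since $\int_{\sn}fh^p\,dx>0$.

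Next I would substitute the flow equation \eqref{GCF22} for $\partial_t h$; the two terms of $\partial_t h$ give
\[
\int_{\sn} f\,h^{p-1}\,\partial_t h\,dx \;=\; -\frac{\eta(t)}{\int_{\sn} f h^p\,dx}\int_{\sn}\frac{f^2 h^{2p-1}\kappa}{\widetilde{V}_{q-1}(\Omega_t,\overline{\nabla} h)}\,dx\;+\;\int_{\sn} f\,h^p\,dx,
\]
so what must be shown is
\[
\Bigl(\int_{\sn} f h^p\,dx\Bigr)^{2}\;\le\;\eta(t)\int_{\sn}\frac{f^2 h^{2p-1}\kappa}{\widetilde{V}_{q-1}(\Omega_t,\overline{\nabla} h)}\,dx.
\]

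The linchpin is the identity
\[
\eta(t)\;=\;\int_{\sn} h(x,t)\,\widetilde{V}_{q-1}(\Omega_t,\overline{\nabla} h)\,\frac{1}{\kappa}\,dx,
\]
which follows from the $(n+q-1)$-homogeneity of $I_q$ together with the variational formula \eqref{IQR} and the expression $dF_q(\Omega_t,x)=\tfrac{2q}{\omega_n}\widetilde{V}_{q-1}(\Omega_t,\overline{\nabla} h)\kappa^{-1}dx$; indeed it is precisely the identity responsible for the vanishing of the integral in the proof of Lemma \ref{monJ1}. Granted this, I would write
\[
\int_{\sn} f\,h^p\,dx \;=\; \int_{\sn}\sqrt{\frac{f^2 h^{2p-1}\kappa}{\widetilde{V}_{q-1}(\Omega_t,\overline{\nabla} h)}}\cdot\sqrt{\frac{h\,\widetilde{V}_{q-1}(\Omega_t,\overline{\nabla} h)}{\kappa}}\,dx
\]
and apply Cauchy--Schwarz, which together with the identity above yields exactly the required inequality.

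The computation is essentially routine: positivity of $h$, $f$, $\kappa$ and $\widetilde{V}_{q-1}$ on $\partial\Omega_t$ (guaranteed because $\Omega_t$ is smooth, strictly convex and contains the origin) is all that is needed to make the Cauchy--Schwarz step rigorous. The only genuine ``choice'' is the splitting of $fh^p$ so that one factor combines with $1/\widetilde{V}_{q-1}$ and the other reconstructs $h\widetilde{V}_{q-1}/\kappa$; once this is recognised there is no substantive obstacle.
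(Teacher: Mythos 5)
Your argument is correct, but it reaches the conclusion along a different road than the paper does. The paper does not invoke Lemma~\ref{monJ1} at all: it differentiates $J$, rewrites the two terms under a single integral against $\partial_t h$, and observes that the bracketed factor equals $-\partial_t h$ divided by the positive quantity $\kappa h\,\eta(t)/\widetilde{V}_{q-1}$, so that $\tfrac{d}{dt}J=-\int_{\sn}(\partial_t h)^2/\bigl(\kappa h\,\eta(t)\widetilde{V}_{q-1}^{-1}\bigr)\,dx\le 0$ as a manifest perfect square. You instead use Lemma~\ref{monJ1} to annihilate the $I_q$-term, reduce to the sign of $\int f h^{p-1}\partial_t h$, and close the argument with Cauchy--Schwarz and the identity $\eta(t)=\int_{\sn}h\,\widetilde{V}_{q-1}\kappa^{-1}\,dx$. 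That identity is indeed valid --- it is the Euler relation for the $(n+q-1)$-homogeneous functional $I_q$ combined with the variational formula \eqref{IQR} and the smooth-body expression $dF_q=\tfrac{2q}{\omega_n}\widetilde{V}_{q-1}\kappa^{-1}dx$ --- and, as you note, it is the same identity that makes Lemma~\ref{monJ1} work. The two routes are algebraically equivalent (expanding the paper's square and using the normalization recovers your Cauchy--Schwarz inequality), but yours makes explicit that the normalization $\eta(t)$ is precisely tuned so that equality in Cauchy--Schwarz characterizes the stationary points, whereas the paper's is shorter and avoids invoking the conservation of $I_q$ as a separate step. Both are fine.
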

\begin{proof}
Taking the derivative of both side of \eqref{func2}  with respect to $t$, and utilizing \eqref{GCF22}, there is
\begin{equation*}
\begin{split}
\label{}
\frac{d}{dt}J(\Omega_{t})&=-\int_{\sn}\frac{\frac{\partial h}{\partial t}h\widetilde{V}_{q-1}(\Omega_{t},\overline{\nabla} h)\frac{1}{\kappa}\frac{2q}{\omega_{n}}}{(n+q-1)I_{q}(\Omega_{t})h}dx+\frac{\int_{\sn}f(x)h^{p-1}\frac{\partial h}{\partial t}dx}{\int_{\sn}f(x)h^{p}dx}\\
&=\int_{\sn}\frac{\partial h}{\partial t}\left[\frac{-h}{\kappa h\frac{\omega_{n}}{2q}(n+q-1)I_{q}(\Omega_{t})\frac{1}{\widetilde{V}_{q-1}(\Omega_{t},\overline{\nabla} h)}}+\frac{fh^{p-1}}{\int_{\sn}fh^{p}dx}\right]dx\\
&=\int_{\sn}\frac{\partial h}{\partial t}\left[\frac{-h+\frac{\kappa h^{p} \frac{\omega_{n}}{2q}(n+q-1)I_{q}(\Omega_{t})\frac{1}{\widetilde{V}_{q-1}(\Omega_{t},\overline{\nabla} h)}f}{\int_{\sn}fh^{p}dx}}
{\kappa h\frac{\omega_{n}}{2q}(n+q-1)I_{q}(\Omega_{t})\frac{1}{\widetilde{V}_{q-1}(\Omega_{t},\overline{\nabla } h)}}\right]dx\\
&=-\int_{\sn}\frac{[-h+\frac{\kappa h^{p} \frac{\omega_{n}}{2q}(n+q-1)I_{q}(\Omega_{t})\frac{1}{\widetilde{V}_{q-1}(\Omega_{t},\overline{\nabla} h)}f}{\int_{\sn}fh^{p}dx}]^{2}}{\kappa h\frac{\omega_{n}}{2q}(n+q-1)I_{q}(\Omega_{t})\frac{1}{\widetilde{V}_{q-1}(\Omega_{t},\overline{\nabla} h)}}dx\\
&\leq 0.
\end{split}
\end{equation*}
Hence, the proof is completed.
\end{proof}

\section{$C^{0}$, $C^{1}$ estimates}
\label{Sec4}
In this section, the $C^{0}$, $C^{1}$ estimates of the solution to flow \eqref{GCF2} are established. First, the $C^{0}$ estimate is shown as below.

\begin{lem}\label{C0}
 Suppose $\{p>0, q>0\}\cup \{-n<p<0, 1<q<n+1\}$. Let $f$ be an even, smooth and positive function on $\sn$, and $\Omega_{t}$ be a smooth, origin-symmetric and strictly convex solution satisfying the flow \eqref{GCF2}. Then
\begin{equation}\label{C0*}
\frac{1}{C}\leq h(x,t)\leq C, \ \forall (x,t)\in {\sn}\times (0,+\infty),
\end{equation}
and
\begin{equation}\label{C00*}
\frac{1}{C}\leq \rho(u,t)\leq C, \ \forall (u,t)\in {\sn}\times (0,+\infty).
\end{equation}
Here $h(x,t)$ and $\rho(u,t)$ are the support function and the radial function of $\Omega_{t}$.
\end{lem}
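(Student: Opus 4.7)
My strategy hinges on three structural ingredients from Section~\ref{Sec3}: the flow preserves origin-symmetry of $\Omega_t$ (since $f$ is even and $\Omega_0$ is symmetric, the equation is invariant under $x\mapsto -x$), the chord integral $I_q(\Omega_t)\equiv I_q(\Omega_0)$ is conserved (Lemma~\ref{monJ1}), and $J(\Omega_t)\le J(\Omega_0)$ (Lemma~\ref{monJ2}). For origin-symmetric convex bodies one has $h_{\max}=\rho_{\max}$ and $h_{\min}=\rho_{\min}$, so \eqref{C0*} and \eqref{C00*} are equivalent to a uniform two-sided bound on the support function $h(\cdot,t)$. Combining Lemmas~\ref{monJ1} and~\ref{monJ2} yields the a priori inequality
\[
\tfrac{1}{p}\log\int_{\sn} f(x)h(x,t)^p\,dx \leq C_0,
\]
which gives $\int fh^p\,dx\le C_1$ when $p>0$ and $\int fh^p\,dx\ge c_1>0$ when $p<0$.

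\textbf{The case $p>0$.} Set $R_t:=h_{\max}(\cdot,t)=\rho_{\max}(\cdot,t)$ and choose $u_t\in\sn$ with $\rho(u_t,t)=R_t$. Origin-symmetry places the segment $[-R_tu_t,R_tu_t]\subset\Omega_t$, so $h(x,t)\ge R_t|x\cdot u_t|$, which integrated against $f$ gives $\int fh^p\,dx\ge cR_t^p$; here the uniform positivity of $\int_{\sn}|x\cdot u|^p\,dx$ in $u$ uses $p>-1$ and continuity over the compact $\sn$. Combined with $\int fh^p\,dx\le C_1$, this forces $R_t\le C$. For the lower bound $r_t:=h_{\min}(t)\ge c>0$, I argue by contradiction: if $r_t\to 0$ along a subsequence, $\Omega_t$ lies in a slab of vanishing width in some direction $x_t$ while remaining inside $B_{R_t}$, and the pointwise chord bound $X_{\Omega_t}(\vartheta,u)\le\min(2R_t,2r_t/|u\cdot x_t|)$ combined with \eqref{I1} and a H\"older estimate (for $q\ge 1$) or a Jensen estimate (for $0<q<1$) yields $I_q(\Omega_t)\to 0$, contradicting conservation.

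\textbf{The case $-n<p<0$, $1<q<n+1$ (the main obstacle).} Now Lemma~\ref{monJ2} only delivers $\int fh^p\,dx\ge c_1>0$; a matching upper bound on this integral must be manufactured from $I_q$ conservation. The key replacement for the segment estimate is John's theorem: for the origin-symmetric $\Omega_t$ there is an origin-centered ellipsoid $E_t$ with semi-axes $a_1(t)\ge\cdots\ge a_n(t)$ satisfying $E_t\subset\Omega_t\subset\sqrt n\,E_t$, hence $h_{\Omega_t}(x)\ge\sqrt{\sum a_i^2 x_i^2}$. The AM-GM bound $\sum a_i^2 x_i^2\ge n(\prod a_i)^{2/n}(\prod|x_i|)^{2/n}$ then gives, for $p<0$,
\[
\int fh^p\,dx\le C\Bigl(\prod_i a_i\Bigr)^{p/n}\int_{\sn}\Bigl(\prod_i|x_i|\Bigr)^{p/n}dx,
\]
where the last integral is finite precisely because $p>-n$. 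The restriction $1<q<n+1$ lets me invoke the Riesz rearrangement inequality $I_q(\Omega)\le c(n,q)|\Omega|^{(n+q-1)/n}$, which combined with conservation of $I_q$ and the relation $\prod a_i\sim|\Omega_t|$ (from John) yields a uniform lower bound on $\prod a_i$ and hence a uniform upper bound on $\int fh^p\,dx$. With $\int fh^p\,dx$ now controlled on both sides, I close the argument by a contradiction analysis analogous to the $p>0$ case: using the Lipschitz character of $h$ (with constant $\rho_{\max}$) to convert the integral bound into pointwise control near the extrema, and using the slab--$I_q$ estimate to rule out degenerations, with the restriction $q>1$ being essential both for the Riesz-potential representation and for the slab chord estimate.
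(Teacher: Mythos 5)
Your $p>0$ case is essentially the paper's argument: the segment estimate $h\geq\rho_{\max}|x\cdot e_1|$ combined with $\frac{1}{p}\log\int fh^p\leq C_0$ gives the upper bound, and degeneration is ruled out via $I_q$ conservation plus continuity of $I_q$ (the paper uses Blaschke selection and then $I_q(\widetilde\Omega)=0$ for a lower-dimensional limit; your slab--chord estimate is a minor variant).

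The $p<0$ case is where you diverge from the paper, and this is where there is a genuine gap. Your AM--GM step $h\geq\sqrt{n}(\prod a_i)^{1/n}(\prod|x_i|)^{1/n}$ retains only the product $\prod a_i\sim|\Omega_t|$ and discards all information about the shape of the John ellipsoid. Combined with $\int fh^p\geq c_1$ this pins down $\prod a_i$ on both sides, i.e.\ the volume, but a sequence of origin-symmetric bodies with $|\Omega_t|\asymp 1$ can still have $a_1(t)\to\infty$. At that point your two suggested closings do not hold up as stated: (i) an upper bound on $\int fh^p\,dx$ when $p<0$ says nothing pointwise about $h_{\max}$ (the $L^p$ quasi-norm with negative $p$ only controls how \emph{small} $h$ can be, not how large), so ``Lipschitz character converts the integral bound to pointwise control near the extrema'' does not produce the needed bound on $\rho_{\max}$; and (ii) the ``slab--$I_q$ estimate to rule out degenerations'' would need the nontrivial fact that $I_q(\Omega_t)\to 0$ when the volume stays bounded and the diameter blows up (true for $1<q<n+1$, but requiring a real computation with the Riesz kernel), which you assert rather than prove. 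By contrast, the paper keeps the shape information: it splits $\sn$ into $S_1=\{h_{E}<\delta\}$, $S_2=\{\delta\leq h_E\leq1/\delta\}$, $S_3=\{h_E>1/\delta\}$, uses that $a_1\to\infty$ forces $|S_1\cup S_2|\to 0$, bounds $\int_{S_1}(h_E/n)^pf$ via H\"older and the Blaschke--Santal\'o inequality (the Santal\'o ingredient controls $\int h_E^{-n}$ by the reciprocal volume, for which the lower volume bound from $I_q$ conservation is exactly what is needed), and bounds $\int_{S_3}$ by $c\delta^{-p}\to 0$; this shows $\int fh^p\to 0$, contradicting $\int fh^p\geq c_1$ directly, without any detour through $I_q$ of degenerating bodies. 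To repair your route you would have to replace the lossy AM--GM step by the actual ellipsoid computation $\int_{\sn}(\sum a_i^2x_i^2)^{p/2}dx\to 0$ as $a_1\to\infty$ with $\prod a_i$ bounded below (which is essentially what the paper's decomposition encodes), or else supply the quantitative estimate that $I_q\to 0$ for slabs of bounded volume; as written the argument does not close.
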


\begin{proof}
Due to $\rho(u,t)u=\nabla h(x,t)+h(x,t)x$. Clearly, we have
\begin{equation}\label{p1}
\min_{\sn} h(x,t)\leq \rho(u,t)\leq \max_{\sn}h(x,t).
\end{equation}
This implies that the estimate \eqref{C0} is tantamount to the estimate \eqref{C00*}. So we only need to establish \eqref{C0} or \eqref{C00*}.

Using the monotonicity of $J(\Omega_{t})$ as revealed in lemma \ref{monJ2}, we derive
\begin{equation}
\begin{split}
\label{JD}
J(\Omega_{0})+\frac{1}{n+q-1}\log I_{q}(\Omega_{0})\geq \frac{1}{p}\log\int_{\sn}f(x)h(x,t)^{p}dx.
\end{split}
\end{equation}
Let $\rho_{max}(t)=\max_{\sn}\rho(\cdot,t)$. By a rotation of coordinate, we suppose that $\rho_{max}(t)=\rho(e_{1},t)$. Since $\Omega_{t}$ is origin symmetric, by the definition of $h(x,t)$, one sees $h(x,t)\geq \rho_{max}(t)|x\cdot e_{1}|$ for $\forall x\in \sn$.

Next we analyse two cases to get the upper bound of $h(x,t)$.

Case 1: in the case $p>0$, together with \eqref{JD}, we get
\begin{equation*}
\begin{split}
\label{}
I_{q}(\Omega_{0})^{\frac{p}{n+q-1}}e^{pJ(\Omega_{0})}\geq\int_{\sn}h(x,t)^{p}f{dx}&\geq \int_{\sn}\rho_{max}(t)^{p}|x\cdot e_{1}|^{p}f{dx}\\
&\geq |\min_{\sn} f|\rho_{max}(t)^{p}\int_{\sn}|x\cdot e_{1}|^{p} {dx}\\
&\geq|\min_{\sn} f|\rho_{max}(t)^{p}c_{0},
\end{split}
\end{equation*}
hence,
\begin{equation}\label{max1}
\rho_{max}(t)\leq \left( \frac{I_{q}(\Omega_{0})^{\frac{p}{n+q-1}}e^{pJ({\Omega_{0}})}}{c_{0}|\min f|}\right)^{\frac{1}{p}}\leq C,
\end{equation}
for some $C>0$, independent of $t$. This implies the upper bound for \eqref{C0*}.

Case 2: in the case $p<0$, applying \eqref{JD} again, there exists a positive constant $C^{*}$, independent of $t$, such that
\begin{equation}
\begin{split}
\label{xip}
C^{*}\leq I_{q}(\Omega_{0})^{\frac{p}{n+q-1}}e^{pJ(\Omega_{0})}\leq\int_{\sn}h(x,t)^{p}f(x){dx}.
\end{split}
\end{equation}
For $q>1$, given by \eqref{IQ}, we know
\begin{equation}\label{C1*}
I_{q}(\Omega_{t})=\frac{q}{n\omega_{n}}\int_{\Omega_{t}}\int_{\sn}\rho^{q-1}_{\Omega_{t},z}(u)dudz.
\end{equation}
As $1<q< n+1$, by means of  H\"{o}lder inequality, we obtain
\begin{equation}
\begin{split}
\label{Lo}
I_{q}(\Omega_{0})=I_{q}(\Omega_{t})&\leq \frac{q}{n\omega_{n}} \left(\int_{\Omega_{t}}\int_{\sn}\rho^{n}_{\Omega_{t},z}(u)du dz\right)^{\frac{q-1}{n}}\left(\int_{\Omega_{t}}\int_{\sn}dudz\right)^{1-\frac{q-1}{n}}\\
&\leq CVol(\Omega_{t})^{\frac{n+q-1}{n}},
\end{split}
\end{equation}
which gives that
\begin{equation}\label{vcl}
Vol(\Omega_{t})\geq \widetilde{C}
\end{equation}
for a positive constant $C$, independent of $t$.

 To proceed further, corresponding to case 2, we take by contradiction in the spirit of \cite[p.58]{CW06}. Assume that there is a sequence of convex bodies $\{\Omega_{t_{j}}\}$ satisfying  $ \max_{\sn} h(x,t_{j})\rightarrow \infty$ as $j\rightarrow \infty$. Since $\Omega_{t_{j}}$ is origin symmetric, by John's lemma, there is $\frac{1}{n}E_{t_{j}}\subseteq \Omega_{t_{j}}\subseteq E_{t_{j}}$, where $E_{t_{j}}$ denotes the minimum ellipsoid of $\Omega_{t_{j}}$. Therefore, $\frac{1}{n}h_{E_{t_{j}}}\leq h(x,t_{j})\leq h_{E_{t_{j}}}$. For any fixed small constant $\delta>0$, we decompose $\sn$ into three sets as follows:
\[
S_{1}:=\sn\cap \{h_{E_{t_{j}}}<\delta\}, \ S_{2}:=\sn\cap \{\delta\leq h_{E_{t_{j}}}\leq\frac{1}{\delta}\}, \ S_{3}:=\sn\cap \{h_{E_{t_{j}}}>\frac{1}{\delta}\}.
\]
On the one hand, using \eqref{xip}, for $p<0$, we have
\begin{equation}\label{p1}
C^{*}\leq \int_{\sn}h(x,t_{j})^{p}f(x)dx \leq \int_{\sn}(\frac{h_{E_{t_{j}}}}{n})^{p}f(x)dx.
\end{equation}
On the other hand, since $I_{q}(\Omega_{0}) \leq I_{q}(E_{t_{j}})\leq c(n)^{-(n+q-1)}I_{q}(\Omega_{0})$,  for $-n<p<0$, by Blaschke-Santal\'{o} inequality, H\"{o}lder inequality and \eqref{vcl},  as $\max_{\sn}h(x,t_{j})\rightarrow \infty$ for $j\rightarrow \infty$,
\begin{equation}
\begin{split}
\label{Up3}
\int_{S_{1}}(\frac{h_{E_{t_{j}}}}{n})^{p}f(x)dx &\leq c_{0}\left( \int_{S_{1}}(\frac{h_{E_{t_{j}}}}{n})^{-n} dx \right)^{-\frac{p}{n}}|S_{1}|^{\frac{p+n}{n}}\\
&\leq c_{1}|S_{1}|^{\frac{p+n}{n}}\rightarrow 0.
\end{split}
\end{equation}
Similarly, we obtain
\begin{equation}
\begin{split}
\label{Up3}
\int_{S_{2}}(\frac{h_{E_{t_{j}}}}{n})^{p}f(x)dx\rightarrow 0.
\end{split}
\end{equation}
Also, there is
\begin{equation}\label{}
\int_{S_{3}}(\frac{h_{E_{t_{j}}}}{n})^{p}f(x)dx\leq c_{2} \int_{S_{3}}(\frac{1}{n\delta})^{p}dx=c_{2}(\frac{1}{n\delta})^{p}|S_{3}|\leq c_{3}\delta^{-p}.
\end{equation}
Thus, for any $\delta>0$, employing \eqref{p1}, we derive
\[
C^{*}\leq o(1)+c_{2}\delta^{-p}.
\]
Taking $\delta\rightarrow 0$, we reach a contradiction. So, we conclude that $h(x,t)$ is bounded above uniformly.

To give the lower bound of $h(x,t)$, we take by contradiction technique. Let $\{t_{k}\}\subset [0,+\infty)$ be a sequence such that $h(x,t_{k})$ is not uniformly bounded away from 0, i.e.,
\[
\min_{\sn}h(\cdot, t_{k})\rightarrow 0 \quad as \quad t_k\rightarrow+\infty.
\]
Along another side, making use of the upper bound of \eqref{C0*}, by the Blaschke selection theorem (see \cite{S14}), there is a sequence in $\{\Omega_{t_{k}}\}$, which is still denoted by $\{\Omega_{t_{k}}\}$, such that
\[
\Omega_{t_{k}}\rightarrow \widetilde{\Omega}\quad as \quad t_k\rightarrow+\infty.
\]
Since $\Omega_{t_{k}}$ is an origin-symmetric convex body, $\widetilde{\Omega}$ is also origin-symmetric. And we have
\[
\min_{\sn}h_{\widetilde{\Omega}}(\cdot)=\lim_{k\rightarrow+\infty}\min_{\sn} h_{\Omega_{t_{k}}}(\cdot)=0.
\]
This implies that $\widetilde{\Omega}$ is contained in a lower-dimensional subspace in $\rnnn$. This fact in conjunction with the definition of $I_{q}$ defined in \eqref{I1} for $q>0$ imply that $I_{q}(\widetilde{\Omega})=0$. However, by virtue of lemma \ref{monJ1} and the continuity of $I_{q}$, one sees $I_{q}(\widetilde{\Omega})=I_{q}(\Omega_{0})\neq 0$, which is a contradiction.  It follows that $h(x,t)$ has a uniform lower bound. Hence, we complete the proof of lemma \ref{C0}.
\end{proof}

The $C^{1}$ estimate naturally follows by applying above $C^{0}$ estimate.
\begin{lem}
Suppose $\{p>0, q>0\}\cup \{-n<p<0, 1<q<n+1\}$. Let $f$ be an even, smooth and positive function on $\sn$, and $\Omega_{t}$ be a smooth, origin-symmetric and strictly convex solution satisfying the flow \eqref{GCF2}. Then
\begin{equation}\label{C1*}
|\nabla h(x,t)|\leq C, \ \forall (x,t)\in {\sn}\times (0,+\infty),
\end{equation}
for some $C> 0$, independent of $t$.
\end{lem}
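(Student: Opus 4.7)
The plan is to extract the $C^1$ bound as an immediate consequence of the $C^0$ bound already established in Lemma \ref{C0}, by exploiting the pointwise identity that ties together the support function, its spherical gradient, and the radial function.

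Concretely, I would invoke \eqref{Fdef}, which states $F(x)=\nabla h_{\Omega_t}(x)+h(x,t)x$, together with the definition of the radial function, which gives $F(x)=\rho(u,t)u$ for the outward direction $u=F(x)/|F(x)|\in\mathbb{S}^{n-1}$. Since $\nabla h(x,t)$ is the spherical gradient, it lies in the tangent space $T_x\mathbb{S}^{n-1}$ and is therefore orthogonal to $x$. Taking the squared Euclidean norm of both sides of $\nabla h(x,t)+h(x,t)x=\rho(u,t)u$ yields
\begin{equation*}
|\nabla h(x,t)|^{2}+h(x,t)^{2}=\rho(u,t)^{2}.
\end{equation*}

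From Lemma \ref{C0} we already have $\rho(u,t)\leq C$ for all $(u,t)\in\mathbb{S}^{n-1}\times(0,+\infty)$ under the hypothesis $\{p>0,q>0\}\cup\{-n<p<0,1<q<n+1\}$. Therefore
\begin{equation*}
|\nabla h(x,t)|^{2}\leq \rho(u,t)^{2}\leq C^{2},
\end{equation*}
which is the desired bound. There is no real obstacle here: the entire content of the statement was packaged into the $C^0$ estimate of the preceding lemma, and the $C^1$ estimate is just its geometric reformulation via the orthogonality $\nabla h\perp x$. In particular the proof does not use the flow equation \eqref{GCF2} or the Riesz potential $\widetilde{V}_{q-1}$ at all beyond what was already needed for Lemma \ref{C0}.
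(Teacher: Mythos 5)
Your proof is correct and follows exactly the same route as the paper: both use the identity $\rho^{2}=h^{2}+|\nabla h|^{2}$ (which you derive via the orthogonality $\nabla h\perp x$, and which the paper states directly) and then invoke the $C^{0}$ bound from Lemma \ref{C0}. No discrepancy.
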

\begin{proof}
Let $u$ and $x$ be related by $\rho(u,t)u=\nabla h(x,t)+h(x,t)x$, we have
\begin{equation*}
 h=\frac{\rho^{2}}{\sqrt{|\nabla \rho|^{2}+\rho^{2}}},\quad \rho^{2}=h^{2}+|\nabla h|^{2}.
\end{equation*}
The above facts together with Lemma \ref{C0} illustrate the desired result.
\end{proof}

\section{$C^{2}$ estimate}
\label{Sec5}
Our purpose is to obtain the $C^{2}$ estimate on the solution of the flow \eqref{GCF2} in this section.

In what follows, we shall give the upper and lower bounds of principal curvature of $\partial \Omega_{t}$ based on above preparations.

\begin{theo}\label{MMM}
Suppose $\{p>0, q>3\}\cup \{-n<p<0, 3<q<n+1\}$. Let $f$ be an even, smooth and positive function on $\sn$, and $\Omega_{t}$ be a smooth, origin-symmetric and strictly convex solution satisfying the flow \eqref{GCF2}. Then
\begin{equation}\label{PUL}
\frac{1}{C}\leq \kappa_{i}\leq C, \ \forall(x,t)\in {\sn}\times (0,+\infty)
\end{equation}
for some positive constants $C$, independent of $t$.
\end{theo}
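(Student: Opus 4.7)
The plan is a parabolic maximum-principle argument with two auxiliary functions, one giving an upper bound on the Gauss curvature $\kappa$ and one giving an upper bound on the principal radii, with the main technical burden concentrated on producing uniform $C^{2}(\sn)$ control of the nonlocal factor $\widetilde V_{q-1}(\Omega_{t},\overline{\nabla} h)$.

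The first step is to package the coefficients of \eqref{GCF22} into a single function
\[
\Psi(x,t):=f(x)\,h^{p}\,\frac{\eta(t)}{\int_{\sn}fh^{p}\,dx}\,\frac{1}{\widetilde V_{q-1}(\Omega_{t},\overline{\nabla} h)},
\]
so that the flow reads $h_{t}=-\Psi\kappa+h$. By Lemma~\ref{monJ1} one has $\eta(t)\equiv\eta(0)$, and by Lemma~\ref{C0} the quantity $\int_{\sn}fh^{p}\,dx$ is pinched between positive constants. The remaining factor $\widetilde V_{q-1}(\Omega_{t},\overline{\nabla} h(x,t))$ is a Riesz potential on $\partial\Omega_{t}$ evaluated at the boundary point $\overline{\nabla} h(x)$; using the representation \eqref{cj*}, the $C^{0}$--$C^{1}$ estimates of Section~\ref{Sec4}, and the boundary Riesz-potential regularity proved in \cite{HJ23}, the hypothesis $q>3$ yields uniform positive two-sided bounds together with a uniform $C^{2}(\sn)$ bound on this function, hence the analogous uniform bounds on $\Psi$, independent of $t$.

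With $\Psi$ under control, I would bound $\kappa$ from above via the maximum principle applied to
\[
\Theta(x,t):=\log(h-h_{t})-\alpha\log h,
\]
for $\alpha$ chosen after inspection. Since $h-h_{t}=\Psi\kappa$, an upper bound on $\Theta$ gives an upper bound on $\kappa$, equivalently the lower bound $\det b_{ij}\ge c>0$, where $b_{ij}=h_{ij}+h\delta_{ij}$. Differentiating $\Theta$ along the flow and using the linearisation of $\log\det$ produces a parabolic inequality whose zeroth-order term has the correct sign once $\alpha$ is large, while the spatial $C^{2}$ bound on $\Psi$ absorbs the remaining error terms at an interior maximum. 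Then, to bound the principal radii from above (which, combined with the previous step, forces $\kappa_{i}\ge 1/C$), I would apply the maximum principle to
\[
W(x,t):=\log\lambda_{\max}(b_{ij})-A\log h+B\,\abs{\nabla h}^{2}
\]
with large $A,B$. Diagonalising at an interior maximum so that the largest eigenvalue is $b_{11}$ and using the evolution $\pd_{t}b_{11}=-\pd_{11}(\Psi\kappa)+b_{11}$ together with the standard concavity/commutator algebra for $\log\det$, the spatial $C^{2}$ bound on $\Psi$ from the first step, and Lemma~\ref{C0}, one obtains an inequality of the form $0\le -c\,b_{11}+C$ at the maximum, whence $\lambda_{\max}(b_{ij})\le C$ globally.

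The real obstacle is the first step. The Riesz kernel $|y-z|^{q-1-n}$ is singular as $y\to z\in\partial\Omega_{t}$, and differentiating $x\mapsto\widetilde V_{q-1}(\Omega_{t},\overline{\nabla} h(x))$ twice is delicate because varying $x$ moves the base point $\overline{\nabla} h(x)$ along the boundary itself. The device of \cite{HJ23} is a near/far decomposition of the defining integral: the far part is manifestly smooth, while the near part is evaluated in local graph coordinates of $\partial\Omega_{t}$ over the tangent plane, and the integrability threshold of the singular kernel produces $C^{2}$ control precisely in the range $q-1>2$. This is why the hypothesis $q>3$ appears in Theorem~\ref{MMM}, and any relaxation of it would require a corresponding improvement of the boundary Riesz-potential estimate.
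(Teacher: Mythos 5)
Your overall skeleton is the same as the paper's: two parabolic maximum-principle arguments, one auxiliary function to bound the Gauss curvature from above (via a Tso-type quantity), another to bound the principal radii from above, with the nonlocal factor $\widetilde V_{q-1}(\Omega_t,\overline\nabla h)$ identified as the technical crux and $q>3$ correctly traced to the integrability of the Riesz kernel. The paper's auxiliary functions are $Q=-h_t/(h-\varepsilon_0)$ and $E=\log(\textstyle\sum_j b_{jj})-\tilde d\log h+\tilde l|\nabla h|^2$, essentially logarithmic variants of your $\Theta$ and $W$ (the paper uses the trace rather than $\lambda_{\max}$, which avoids regularity issues at eigenvalue crossings but is a cosmetic difference).

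The genuine gap is the claim that the $C^0$--$C^1$ estimates together with the boundary Riesz-potential regularity of \cite{HJ23} give a uniform $C^2(\sn)$ bound on $x\mapsto\widetilde V_{q-1}(\Omega_t,\overline\nabla h(x,t))$ \emph{independent of $t$}, and hence on $\Psi$. This is circular. The base point $\overline\nabla h(x)=\nabla h(x)+h(x)x$ has spatial derivative $\overline\nabla h_{,j}=\sum_k b_{jk}e_k$, so differentiating the composition once already produces a factor $b_{jj}$, and twice produces $b_{jj}^2$ and $b_{jjk}$; this is exactly what the paper records in \eqref{K11} and \eqref{K22}, quoting \cite[Lemma~5.1, 5.2]{HJ23}. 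A uniform $C^2(\sn)$ bound on $\widetilde V_{q-1}(\Omega_t,\overline\nabla h(\cdot,t))$ therefore presupposes the very bound on $b_{ij}$ (equivalently on $\kappa_i$) that Theorem~\ref{MMM} asserts. The regularity result of \cite{HJ23} says $\widetilde V_q$ is as smooth as $\partial\Omega$, but the constants in that statement depend on the second fundamental form; they are not controlled by $C^0$--$C^1$ data alone.

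The paper sidesteps this in two distinct ways. For the upper bound on $\kappa$ it chooses $Q=-h_t/(h-\varepsilon_0)$ precisely so that $\partial_tQ$ at a spatial maximum never forces one to differentiate $\Psi$ in $x$: only $\partial_t\widetilde V^{-1}_{q-1}(\Omega_t,\overline\nabla h)$ appears, and \cite[Lemma~5.5]{HJ23} together with \cite[Lemma~3.3]{HJ23} bound it by $C_1\widetilde V_{q-1}^{-2}\,Q$ using only the $C^0$ estimate (this is \eqref{Q33}, and it is where $q>2$ enters). For the upper bound on the principal radii one cannot avoid $\nabla_j\Psi$ and $\nabla_{jj}\Psi$, so the paper substitutes \eqref{K11}--\eqref{K22} \emph{explicitly}, uses the critical-point identity $\frac{1}{\sum b}\sum_j b_{jjk}+2\tilde l h_k b_{kk}=\tilde d h_k/h+2\tilde l hh_k$ to eliminate the $b_{jjk}$ term, bounds $\int_{\Omega_t}|y-z|^{q-n-3}\,dy$ by a constant (this needs $q>3$, giving \eqref{Q3}), and lets the good term $-2\tilde l\sum_i b^{ii}b_{ii}^2$ from the maximum-principle computation absorb the $\sum b_{jj}^{2}/\sum b_{jj}\le\sum b_{jj}$ contributions after choosing $\tilde l$ large. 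To repair your argument you would need to replace the black-box ``$\Psi\in C^2$ uniformly'' by this explicit bookkeeping of the $b_{ij}$-dependent terms at the maximum point; as written, the first step of your proposal assumes what is to be proved.
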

\begin{proof}
First, we shall show the upper bound of Gauss curvature $\kappa$. It is essential to construct the following auxiliary function,
\begin{equation}\label{AF1}
Q(x,t)=\frac{fh^{p}\kappa\eta(t)\frac{1}{\widetilde{V}_{q-1}(\Omega_{t},\overline{\nabla}h)\int_{\sn}fh^{p}dx}-h(x,t)}{h-\varepsilon_{0}}=\frac{-h_{t}}{h-\varepsilon_{0}},
\end{equation}
where $\eta(t)=\frac{\omega_{n}}{2q}(n+q-1)I_{q}(\Omega_{t})$,  $h_{t}=\frac{d h(x,t)}{dt}$ and
\begin{equation*}\label{AF2}
\varepsilon_{0}=\frac{1}{2}\min_{{\sn}\times (0,+\infty)}h(x,t)>0.
\end{equation*}
For any fixed $t\in (0,\infty)$, assume that the maximum of $Q(x,t)$ is achieved at $x_{0}$. Choosing a local orthonormal frame such that $\{b_{ij}\}$ is diagonal at $x_{0}$ with $b_{ij}=h_{ij}+h\delta_{ij}$. Thus, we obtain that at $x_{0}$,
\begin{equation}\label{Up1}
0=\nabla_{i}Q=\frac{-h_{ti}}{h-\varepsilon_{0}}+\frac{h_{t}h_{i}}{(h-\varepsilon_{0})^{2}}.
\end{equation}
Then, with the assistance of \eqref{Up1}, at $x_{0}$, we also have
\begin{equation}
\begin{split}
\label{Up2}
0\geq\nabla_{ii}Q&=\frac{-h_{tii}}{h-\varepsilon_{0}}+\frac{2h_{ti}h_{i}+h_{t}h_{ii}}{(h-\varepsilon_{0})^{2}}-\frac{2h_{t}h^{2}_{i}}{(h-\varepsilon_{0})^{3}}\\
&=\frac{-h_{tii}}{h-\varepsilon_{0}}+\frac{h_{t}h_{ii}}{(h-\varepsilon_{0})^{2}}.
\end{split}
\end{equation}
\eqref{Up2} illustrates that
\begin{equation}
\begin{split}
\label{Up3}
-h_{tii}-h_{t}&\leq - \frac{h_{t}h_{ii}}{h-\varepsilon_{0}}-h_{t}\\
&=\frac{-h_{t}}{h-\varepsilon_{0}}[h_{ii}+(h-\varepsilon_{0})]\\
&=Q(b_{ii}-\varepsilon_{0}).
\end{split}
\end{equation}
 Furthermore,  there is
\begin{equation}
\begin{split}
\label{Up4*}
\partial_{t}Q&=\frac{-h_{tt}}{h-\varepsilon_{0}}+\frac{h^{2}_{t}}{(h-\varepsilon_{0})^{2}}\\
&=\frac{\omega_{n}(n+q-1)f}{2q(h-\varepsilon_{0})}\left[\frac{\partial [\det(\nabla ^{2}h+hI)]^{-1}}{\partial t}\frac{I_{q}(\Omega_{t})h^{p}}{\widetilde{V}_{q-1}(\Omega_{t},\overline{\nabla} h)\int_{\sn}fh^{p}dx}    \right.\\
&\left.\qquad \qquad   +\kappa I_{q}(\Omega_{t})\frac{d\widetilde{V}^{-1}_{q-1}(\Omega_{t},\overline{\nabla} h)}{d t}\frac{h^{p}}{\int_{\sn}fh^{p}dx}\right.\\
&\qquad \qquad \left. +\kappa I_{q}(\Omega_{t})\widetilde{V}^{-1}_{q-1}(\Omega_{t},\overline{\nabla}h)\left(-\frac{ph^{p}\int_{\sn}fh^{p-1}\frac{\partial h}{\partial t}dx}{(\int_{\sn}fh^{p}dx)^{2}}+\frac{ph^{p-1}\frac{\partial h}{\partial t}}{\int_{\sn}f h^{p}dx}  \right)\right].\\
&\qquad \qquad+Q+ Q^{2}.
\end{split}
\end{equation}
In the light of \eqref{Up4*}, by using \eqref{Up3}, at $x_{0}$, one has
\begin{equation}
\begin{split}
\label{Up5}
\frac{\partial [\det(\nabla^{2}h+hI)]^{-1}}{\partial t}&=-[\det(\nabla^{2}h+hI)]^{-2}\sum_{i}\frac{\partial[\det(\nabla^{2}h+hI)] }{\partial b_{ii}}(h_{tii}+h_{t})\\
&\leq[\det(\nabla^{2}h+hI)]^{-2}\sum_{i}\frac{\partial[\det(\nabla^{2}h+hI)] }{\partial b_{ii}}Q(b_{ii}-\varepsilon_{0})\\
&=\kappa Q[(n-1)-\varepsilon_{0}\sum_{i} b^{ii}].
\end{split}
\end{equation}
 Recall the fact that the eigenvalue of $\{b_{ij}\}$ and $\{b^{ij}\}$ are respectively the principal radii and the principal curvature of $\partial \Omega_{t}$ (see for example \cite{U91}). Thus we get
\begin{equation}
\begin{split}
\label{Up66}
\frac{\partial [\det(\nabla^{2}h+hI)]^{-1}}{\partial t}&=\kappa Q[(n-1)-\varepsilon_{0}H]\\
&\leq \kappa Q[(n-1)-\varepsilon_{0}(n-1)\kappa^{\frac{1}{n-1}}],
\end{split}
\end{equation}
where $H$ denotes the mean curvature of $\partial\Omega_{t}$, and the last inequality stems from  $H\geq (n-1)(\Pi_{i} b^{ii})^\frac{1}{n-1}=(n-1)\kappa^{\frac{1}{n-1}}$.

In addition, by virtue of \cite[Lemma 5.5]{HJ23}, at $x_{0}$, there is
\begin{equation*}
\begin{split}
\label{}
\frac{d }{d t}\widetilde{V}^{-1}_{q-1}(\Omega_{t},\overline{\nabla } h)&=-\frac{1}{\widetilde{V}^{2}_{q-1}(\Omega_{t},\overline{\nabla}h)}\frac{d}{dt}\widetilde{V}_{q-1}(\Omega_{t},\overline{\nabla}h)\\
&\leq \frac{|q-1|}{\widetilde{V}^{2}_{q-1}(\Omega_{t},\overline{\nabla}h)}Q(x_{0},t)\left(
    \int_{\Omega_t}
    \frac{dy}{|y-\delbar h|^{n+1-q}}
    +
    \frac{3C|q-1-n|}{n}
    \int_{\Omega_t}
    \frac{d y}{|y-\delbar h|^{n+2-q}}
  \right),
\end{split}
\end{equation*}
which together with \cite[Lemma 3.3]{HJ23} and Lemma \ref{C0}, for $q>2$, to yield that
\begin{equation} \label{Q33}
  \abs{\frac{d }{d t}\widetilde{V}^{-1}_{q-1}(\Omega_{t},\overline{\nabla } h)}
  \leq\frac{ C_1}{\widetilde{V}^{2}_{q-1}} Q(x_{0},t),
\end{equation}
where $C_1$ is a positive constant depending only on $n$, $q$, and the constant
$C$ is in Lemma \ref{C0}.

Substituting \eqref{Up66}, \eqref{Q33} into \eqref{Up4*}. Then, we have that at $x_{0}$,
\begin{equation}
\begin{split}
\label{finic2}
\partial_{t}Q
&\leq \frac{\omega_{n}f|n+q-1|}{2q(h-\varepsilon_{0})}\left[\kappa Q[(n-1)-\varepsilon_{0}(n-1)\kappa^{\frac{1}{n-1}}]I_{q}(\Omega_{t})\frac{h^{p}}{\widetilde{V}_{q-1}(\Omega_{t},\overline{\nabla } h)\int_{\sn}fh^{p}dx}\right.\\
&\quad \quad \left. +C_{1}\widetilde{V}^{-2}_{q-1}(\Omega_{t},\overline{\nabla} h)\kappa  \frac{I_{q}(\Omega_{t})h^{p}}{\int_{\sn}fh^{p}dx}Q(x_{0},t)\right.\\
&\quad \quad +\left. \kappa I_{q}(\Omega_{t})\widetilde{V}^{-1}_{q-1}(\Omega_{t},\overline{\nabla}h)\left( \frac{|p|h^{p}\int_{\sn}fh^{p-1}(h-\varepsilon_{0})Qdx}{(\int_{\sn}fh^{p}dx)^{2}}+\frac{|p|h^{p-1}(h-\varepsilon_{0})Q}{\int_{\sn}fh^{p}dx} \right) \right]\\
& \quad \quad +Q+ Q^{2}.\\
\end{split}
\end{equation}
Note that making use of the $C^{0}$ estimate and the definition of $\widetilde{V}_{q-1}$, for $q>1$, it suffices to have
\begin{equation}\label{qu1}
\frac{1}{C}\leq \widetilde{V}_{q-1}(\Omega_{t},\overline{\nabla} h)\leq C
\end{equation}
for a positive constant $C$, independent of $t$, depending only on $C^{0}$ estimate.

  Now, employing the priori estimates  into equation \eqref{AF1}, this allows us to assume $\kappa\approx Q>>1$. Then, using Lemma \ref{monJ1}, the $C^{0}, C^{1}$ estimates and substituting \eqref{qu1} into \eqref{finic2}, we conclude
\begin{equation}\label{Up13}
\partial_{t}Q\leq C_{0}Q^{2}(C_{1}-\varepsilon_{0}Q^{\frac{1}{n-1}})<0.
\end{equation}
 Hence, the ODE \eqref{Up13} tells that
\begin{equation}\label{Qbod}
Q(x_{0},t)\leq C
\end{equation}
for some $C>0$, independent of $t$.

Making again use of the priori estimates \eqref{AF1} and \eqref{Qbod}, for any $(x,t)$, we obtain
\begin{equation}
\begin{split}
\label{UpFal2}
\kappa&=\frac{(h-\varepsilon_{0})Q(x,t)+h}{f\frac{\omega_{n}}{2q}(n+q-1)I_{q}(\Omega_{t})\frac{h^{p}}{\widetilde{V}_{q-1}(\Omega_{t},\overline{\nabla } h)\int_{\sn}fh^{p}dx}}\\
&\leq \frac{(h-\varepsilon_{0})Q(x_{0},t)+h}{f\frac{\omega_{n}}{2q}(n+q-1)I_{q}(\Omega_{t})\frac{h^{p}}{\widetilde{V}_{q-1}(\Omega_{t},\overline{\nabla } h)\int_{\sn}fh^{p}dx}}\leq C
\end{split}
\end{equation}
for some $C> 0$, independent of $t$. So, the upper bound of Gauss curvature is established.
\end{proof}
Now, our aim is to gain the lower bound of \eqref{PUL}.  Taking advantage of the auxiliary function
\begin{equation}\label{LFun}
E(x,t)=\log (\sum\lambda (\{b_{ij}\}))-\tilde{d}\log h(x,t)+\tilde{l}|\nabla  h|^{2},
\end{equation}
where $\tilde{d}$ and $\tilde{l}$ are positive constants to be specified later, $\sum\lambda(\{b_{ij}\})$ is the sum of eigenvalue of $\{b_{ij}\}$. For any fixed $t\in (0,+\infty)$, assume that the maximum of $E(x,t)$ is attained at $x_{0}$ on $\sn$. Now we choose a local orthonormal frame such that $\{b_{ij}\}(x_{0},t)$ is diagonal, then \eqref{LFun} is transformed into
\begin{equation}\label{LFun2}
\widetilde{E}(x,t)={\rm log}(\sum_{j}b_{jj})-\tilde{d}\log h(x,t)+\tilde{l}|\nabla h|^{2}.
\end{equation}
Utilizing again the above assumption, thus, for any fixed $t\in(0,\infty)$, $\widetilde{E}(x,t)$ has a local maximum at $x_{0}$, which implies that, at $x_{0}$, it yields
\begin{equation}
\begin{split}
\label{gaslw1}
0=\nabla_{i}\widetilde{E}&=\frac{1}{\sum_{j}b_{jj}}\sum_{j}\nabla_{i}b_{jj}-\tilde{d}\frac{h_{i}}{h}+2\tilde{l} \sum_{j}h_{j}h_{ji}\\
&=\frac{1}{\sum_{j}b_{jj}}\sum_{j}\nabla_{i}b_{jj}-\tilde{d}\frac{h_{i}}{h}+2\tilde{l}h_{i}h_{ii},
\end{split}
\end{equation}
and
\begin{equation}\label{gaslw2}
0\geq \nabla_{ii}\widetilde{E}=\frac{1}{\sum_{j}b_{jj}}\sum_{j}\nabla_{ii}b_{jj}-\frac{1}{(\sum_{j}b_{jj})^{2}}(\sum_{j}\nabla_{i}b_{jj})^{2}-\tilde{d}\left(\frac{h_{ii}}{h}-\frac{h^{2}_{i}}{h^{2}}\right)
+2\tilde{l}\left[\sum_{j}h_{j}h_{jii}+h^{2}_{ii}\right].
\end{equation}
Moreover, there is
\begin{equation}
\label{gaslw3}
\partial_{t}\widetilde{E}=\frac{1}{\sum_{j}b_{jj}}\sum_{j}\partial_{t}b_{jj}-\tilde{d}\frac{h_{t}}{h}+2\tilde{l} \sum_{j}h_{j}h_{jt}.
\end{equation}
On the other hand,
\begin{equation}
\begin{split}
\label{gaslw4}
\log(h-h_{t})&=\log\left(f\kappa \eta(t)\frac{h^{p}}{\widetilde{V}_{q-1}(\Omega_{t},\overline{\nabla} h)\int_{\sn}fh^{p}xdx}\right)\\
&=-{\rm log}\det(\nabla^{2}h+hI)+\psi(x,t),
\end{split}
\end{equation}
where
\begin{equation}\label{gaslw5}
\psi(x,t):=\log\left(f \eta(t)\frac{h^{p}}{\widetilde{V}_{q-1}(\Omega_{t},\overline{\nabla} h)\int_{\sn}fh^{p}dx}\right).
\end{equation}
Now, taking the covariant derivative of \eqref{gaslw4} with respect to $e_{j}$, it follows that
\begin{align}\label{gaslw6}
\frac{h_{j}-h_{jt}}{h-h_{t}}&=-\sum_{i,k} b^{ik}\nabla_{j}b_{ik}+\nabla_{j}\psi\notag\\
&=-\sum_{i} b^{ii}(h_{jii}+h_{i}\delta_{ij})+\nabla_{j}\psi,
\end{align}
and
\begin{equation}
\begin{split}
\label{gaslw7}
&\frac{h_{jj}-h_{jjt}}{h-h_{t}}-\frac{(h_{j}-h_{jt})^{2}}{(h-h_{t})^{2}}\\
&=-\sum_{i} b^{ii}\nabla_{jj}b_{ii}+\sum_{i,k} b^{ii}b^{kk}(\nabla_{j}b_{ik})^{2}+\nabla_{jj}\psi.
\end{split}
\end{equation}
On the other hand, the Ricci identity on sphere reads
\begin{equation*}
\nabla_{kk}b_{ij}=\nabla_{ij}b_{kk}-\delta_{ij}b_{kk}+\delta_{kk}b_{ij}-\delta_{ik}b_{jk}+\delta_{jk}b_{ik}.
\end{equation*}
This together with  \eqref{gaslw2}, \eqref{gaslw3},  and \eqref{gaslw7}, by a direct computation, at $x_{0}$, we have
\begin{equation}
\begin{split}
\label{gaslw82}
\frac{\partial_{t}\widetilde{E}}{h-h_{t}}
&=\frac{1}{\sum_{j}b_{jj}}\left[\sum_{j}\frac{(h_{jjt}-h_{jj}+h_{jj}+(n-1)h-(n-1)h+(n-1)h_{t})}{h-h_{t}}\right]\\
&\quad-\tilde{d}\frac{1}{h}\frac{h_{t}-h+h}{(h-h_{t})}+2\tilde{l}\frac{\sum_{j} h_{j}h_{jt}}{h-h_{t}}\\
&=\frac{1}{\sum_{j}b_{jj}}\left[-\sum_{j}\frac{(h_{j}-h_{jt})^{2}}{(h-h_{t})^{2}}+\sum_{i,j} b^{ii}\nabla_{jj}b_{ii}-\sum_{i,j,k} b^{ii}b^{kk}(\nabla_{j}b_{ik})^{2}-\sum_{j}\nabla_{jj}\psi\right]\\
&\quad +\frac{1}{h-h_{t}}-(n-1)\frac{1}{\sum_{j}b_{jj}}+\frac{\tilde{d}}{h}-\frac{\tilde{d}}{h-h_{t}}+2\tilde{l}\frac{\sum_{j} h_{j}h_{jt}}{h-h_{t}}\\
&\leq \frac{1}{\sum_{j}b_{jj}}\left[\sum_{i,j} b^{ii}(\nabla_{ii}b_{jj}-b_{jj}+b_{ii})-\sum_{i,j,k} b^{ii}b^{kk}(\nabla_{j}b_{ik})^{2}\right]\\
&\quad+\frac{1}{h-h_{t}}(1-\tilde{d})-\frac{1}{\sum_{j} b_{jj}}\sum_{j}\nabla_{jj}\psi+\frac{\tilde{d}}{h}+2\tilde{l}\frac{\sum_{j} h_{j}h_{jt}}{h-h_{t}}\\
&\leq \sum_{i}b^{ii}\left[ \frac{1}{(\sum_{j}b_{jj})^{2}}(\sum_{j}\nabla_{i}b_{jj})^{2}+\tilde{d}\left(\frac{h_{ii}}{h}-\frac{h^{2}_{i}}{h^{2}}\right)-2\tilde{l}\left(\sum h_{j}h_{jii}+h^{2}_{ii}\right)\right]+\frac{(n-1)^{2}}{\sum_{j}b_{jj}}\\
&\quad-\frac{1}{\sum_{j}b_{jj}}\sum_{i,j,k} b^{ii}b^{kk}(\nabla_{j}b_{ik})^{2}-\frac{1}{\sum b_{jj}}\sum_{j}\nabla_{jj}\psi+\frac{\tilde{d}}{h}+2\tilde{l}\frac{\sum_{j} h_{j}h_{jt}}{h-h_{t}}+\frac{1}{h-h_{t}}(1-\tilde{d}).
\end{split}
\end{equation}
To proceed further, for each $i$, there is
\begin{equation}
\begin{split}
\label{AMI}
&\sum_{j}b_{jj}\sum_{j,k}b^{kk}(\nabla_{j}b_{ik})^{2}\\
&\geq (\sum_{j}b_{jj})\sum_{j}b^{jj}(\nabla_{i}b_{jj})^{2}\\
&\geq (\sum_{j}\sqrt{b_{jj}b^{jj}(\nabla_{i}b_{jj})^{2}})^{2}\\
&=(\sum_{j}|\nabla_{i}b_{jj}|)^{2}\\
&\geq (\sum_{j}\nabla_{i}b_{jj})^{2}.
\end{split}
\end{equation}
Using \eqref{AMI}, one see
\begin{equation}\label{AM2}
\sum_{i}b^{ii}\frac{1}{(\sum_{j}b_{jj})^{2}}(\sum_{j}\nabla_{i}b_{jj})^{2}-\frac{1}{\sum_{j}b_{jj}}\sum_{i,j,k}b^{ii}b^{kk}(\nabla_{j}b_{ik})^{2}\leq 0.
\end{equation}
Thus, applying \eqref{gaslw6} and \eqref{AM2}, \eqref{gaslw82} turns into
\begin{equation}
\begin{split}
\label{gaslw8}
\frac{\partial_{t}\widetilde{E}}{h-h_{t}}
&\leq\sum_{i} b^{ii}\tilde{d}\left(\frac{h_{ii}+h-h}{h}-\frac{h^{2}_{i}}{h^{2}} \right)-2\tilde{l}\sum_{i} b^{ii}h^{2}_{ii}+2\tilde{l}\sum_{j}h_{j}\left[-\sum_{i} b^{ii}h_{jii}+\frac{h_{jt}}{h-h_{t}}\right]\\
&\quad
-\frac{1}{\sum_{j} b_{jj}}(\sum_{j}\nabla_{jj}\psi)+\frac{\tilde{d}}{h}+\frac{1}{h-h_{t}}(1-\tilde{d})+\frac{(n-1)^{2}}{\sum_{j}b_{jj}}\\
&\leq -\tilde{d}\sum_{i} b^{ii}-2\tilde{l}\sum_{i} b^{ii}(b^{2}_{ii}-2b_{ii}h)-\frac{1}{\sum_{j} b_{jj}}(\sum_{j}\nabla_{jj}\psi)+\frac{n\tilde{d}}{h}+\frac{1}{h-h_{t}}(1-\tilde{d})\\
&\quad+2\tilde{l}\sum_{j}h_{j}\left[\frac{h_{j}}{h-h_{t}}+\sum_{i} b^{ii}h_{j}-\nabla_{j}\psi\right]+\frac{(n-1)^{2}}{\sum_{j}b_{jj}}\\
&\leq-\frac{1}{\sum_{j} b_{jj}}(\sum_{j}\nabla_{jj}\psi)-2\tilde{l}\sum_{j} h_{j}\nabla_{j}\psi +(2\tilde{l}|\nabla h|^{2}-\tilde{d})\sum_{i} b^{ii}-2\tilde{l}\sum_{i} b_{ii}\\
&\quad +\frac{2\tilde{l}|\nabla h|^{2}+1-\tilde{d}}{h-h_{t}}+4(n-1)h\tilde{l}+\frac{n\tilde{d}}{h}+\frac{(n-1)^{2}}{\sum_{j}b_{jj}},
\end{split}
\end{equation}
in which,
\begin{equation}\label{chi}
\nabla_{j}\psi=\frac{f_{j}}{f}-\frac{\nabla_{j}\widetilde{V}_{q-1}(\Omega_{t},\overline{\nabla} h)}{\widetilde{V}_{q-1}(\Omega_{t},\overline{\nabla} h)}+p\frac{h_{j}}{h},
\end{equation}
and
\begin{equation}\label{chi2}
\begin{split}
\nabla_{jj}\psi=\frac{ff_{jj}-f^{2}_{j}}{f^{2}}-\frac{\nabla_{jj}\widetilde{V}_{q-1}(\Omega_{t},\overline{\nabla } h)}{\widetilde{V}_{q-1}(\Omega_{t},\overline{\nabla} h)}+\frac{(\nabla_{j}\widetilde{V}_{q-1}(\Omega_{t},\overline{\nabla} h))^{2}}{(\widetilde{V}_{q-1}(\Omega_{t},\overline{\nabla }h))^{2}}+p\frac{hh_{jj}-h^{2}_{j}}{h^{2}}.
\end{split}
\end{equation}
 Combining \eqref{LFun2}, \eqref{chi} and \eqref{chi2}, this gives
\begin{equation}
\begin{split}
\label{gaslw9*}
&-2\tilde{l}\sum_{j}h_{j}\nabla_{j}\psi-\frac{1}{\sum_{j}b_{jj}}(\sum_{j}\nabla_{jj}\psi)\\
&=-2\tilde{l}\sum_{j} h_{j}\left(\frac{f_{j}}{f}-\frac{\nabla_{j}\widetilde{V}_{q-1}(\Omega_{t},\overline{\nabla } h)}{\widetilde{V}_{q-1}(\Omega_{t},\overline{\nabla} h)}+p\frac{h_{j}}{h}\right)\\
&\quad -\frac{1}{\sum_{j}b_{jj}}\sum_{j}\left(\frac{ff_{jj}-f^{2}_{j}}{f^{2}}-\frac{\nabla_{jj}\widetilde{V}_{q-1}(\Omega_{t},\overline{\nabla} h)}{\widetilde{V}_{q-1}(\Omega_{t},\overline{\nabla} h)}+\frac{(\nabla_{j}\widetilde{V}_{q-1}(\Omega_{t},\overline{\nabla} h))^{2}}{(\widetilde{V}_{q-1}(\Omega_{t},\overline{\nabla} h))^{2}}+p\frac{hh_{jj}-h^{2}_{j}}{h^{2}}\right).
\end{split}
\end{equation}
Employing \cite[Lemma 5.1, Lemma 5.2]{HJ23}, at $x_{0}$, we get
\begin{equation}\label{K11}
\nabla_{j}\widetilde{V}_{q-1}(\Omega_{t},\overline{\nabla}h)=\frac{(q-1)(n-q+1)}{n}\int_{\Omega_{t}}|y-z |^{q-n-3}[(y-z)\cdot e_{j}]b_{jj}dy,
\end{equation}
and
\begin{equation}
\begin{split}
\label{K22}
&\nabla^{2}_{jj}\widetilde{V}_{q-1}(\Omega_{t},\overline{\nabla}h)\\
&=\frac{(q-1)(n-q+1)}{n}\int_{\Omega_{t}}|y-z|^{q-n-3}\sum_{k}[(y-z)\cdot e_{k}]b_{jjk}dy\\
&\quad + \frac{(q-1)(n-q+1)}{n} \int_{\Omega_{t}}|y-z|^{q-n-3}\left[-(q-n-3)b^{2}_{jj}\frac{((y-z)\cdot e_{j})((y-z)\cdot e_{j})}{|y-z|^{2}}\right.\\
&\left. \qquad \quad \quad \quad \qquad \quad \qquad \quad \quad \quad \quad \qquad \qquad  -b^{2}_{jj} -b_{jj}[(y-z)\cdot x]\right]dy.
\end{split}
\end{equation}
Now, taking \eqref{K11} and \eqref{K22} into \eqref{gaslw9*}, at $x_{0}$, we derive
\begin{equation}
\begin{split}
\label{gaslw9*2}
&-2\tilde{l}\sum_{j}h_{j}\nabla_{j}\psi-\frac{1}{\sum_{j}b_{jj}}(\sum_{j}\nabla_{jj}\psi)\\
&=-2\tilde{l}\sum_{j} h_{j}\frac{f_{j}}{f}+\frac{1}{\widetilde{V}_{q-1}(\Omega_{t},\overline{\nabla} h)}\frac{(q-1)(n-q+1)}{n}\int_{\Omega_{t}}|y-z|^{q-n-3}\sum_{j}[(y-z)\cdot e_{j}]2\tilde{l}h_{j}b_{jj}dy\\
&\quad -2p\tilde{l}\frac{|\nabla h|^{2}}{h}-\frac{1}{\sum_{j}b_{jj}}\sum_{j}\left(\frac{ff_{jj}-f^{2}_{j}}{f^{2}}+\frac{(\nabla_{j}\widetilde{V}_{q-1}(\Omega_{t},\overline{\nabla} h))^{2}}{(\widetilde{V}_{q-1}(\Omega_{t},\overline{\nabla} h))^{2}}+p\frac{h(b_{jj}-h)-h^{2}_{j}}{h^{2}}\right)\\
&\quad +\frac{1}{\widetilde{V}_{q-1}(\Omega_{t},\overline{\nabla} h)}\left\{\frac{(q-1)(n-q+1)}{n}\int_{\Omega_{t}}|y-z|^{q-n-3}\sum_{k}[(y-z )\cdot e_{k}]\frac{1}{\sum_{j}b_{jj}}\sum_{j}b_{jjk}dy\right.\\
&\quad \quad \quad+ \left.\frac{(q-1)(n-q+1)}{n} \int_{\Omega_{t}}|y-z|^{q-n-3}\left[-(q-n-3)\frac{1}{\sum_{j}b_{jj}}\sum_{j}b^{2}_{jj}\frac{((y-z)\cdot e_{j})((y-z)\cdot e_{j})}{|y-z|^{2}}\right.\right.\\
&\left. \left. \qquad \quad \quad \quad \qquad \quad \qquad \quad \quad \quad \quad \qquad \qquad  -\frac{1}{\sum_{j}b_{jj}}\sum_{j}b^{2}_{jj} -[(y-z)\cdot x]\right]dy\right\}.
\end{split}
\end{equation}
Now, we set about estimating \eqref{gaslw9*2}. On the one hand, by the polar coordinate transform and $C^{0}$ estimate, for $q>3$, we can get
\begin{equation}\label{Q3}
\int_{\Omega_{t}}|y-z|^{q-n-3}dy\leq C
\end{equation}
for a positive constant $C$, independent of $t$. On the other hand, at $x_{0}$,  recall  \eqref{gaslw1}, i.e.,
 \begin{equation}\label{wii}
  \frac{1}{\sum_{j}b_{jj}}\sum_{j}b_{jjk}+ 2\tilde{l}h_{k}b_{kk}= \tilde{d}\frac{h_{k}}{h}+2\tilde{l}hh_{k}.
\end{equation}
 Substituting \eqref{qu1}, \eqref{Q3}, \eqref{wii},  $C^{0}$ and $C^{1}$ estimates into \eqref{gaslw9*2}. For $q> 3$, thus we obtain
\begin{equation}
\begin{split}
\label{DR}
&-2\tilde{l}\sum_{j}h_{j}\nabla_{j}\psi-\frac{1}{\sum_{j} b_{jj}}\sum_{j}\nabla_{jj}\psi\\
&\leq C_{1}\tilde{l}+C_{2}\sum_{j}b_{jj}+C_{3}\frac{1}{\sum_{j}b_{jj}}+C_{4}\tilde{d}+C_{5}.
\end{split}
\end{equation}
Now, choosing $\tilde{d}=2\tilde{l}\max_{\sn\times (0,+\infty)}|\nabla h|^{2}+1$. Applying \eqref{DR} into \eqref{gaslw8}, at $x_{0}$, we obtain
\begin{equation}\label{te}
\frac{\partial_{t}\widetilde{E}}{h-h_{t}}\leq \widetilde{C}_{1}\tilde{l}+C_{2}\sum_{j}b_{jj}+C_{3}\frac{1}{\sum_{j}b_{jj}}+C_{5} -2\tilde{l}\sum_{j}b_{jj}+4(n-1)h\tilde{l}+\frac{n\tilde{d}}{h}.
\end{equation}
If we take
\[\tilde{l}> C_{2}\]
and provide $\sum_{j}b_{jj}\gg 1$, there is
\begin{equation}\label{te2}
\frac{\partial_{t}\widetilde{E}}{h-h_{t}}\leq \widetilde{C}_{1}\tilde{l}+C_{3}\frac{1}{\sum_{j}b_{jj}}+C_{5}-\tilde{l}\sum_{j}b_{jj}+4(n-1)h\tilde{l}+\frac{n\tilde{d}}{h}< 0,
\end{equation}
which implies that
\begin{equation}\label{C1*}
E(x_{0},t)=\widetilde{E}(x_{0},t)\leq C
\end{equation}
for some $C> 0$, independent of $t$. Hence the proof of Theorem \ref{MMM} is completed.

\section{Proof of Theorem \ref{main}}
\label{Sec6}
In this section, our aim is to prove Theorem \ref{main}.

{ \bf \emph{ Proof of Theorem \ref{main}}.} From Sec. \ref{Sec4} and Sec. \ref{Sec5}, we conclude that $\eqref{GCF2}$ is uniformly parabolic in $C^{2}$ norm space. By virtue of \cite[Theorem 1.2]{HJ23}, one sees that $\widetilde{V}_{q}$ has the same smoothness as the boundary of convex body for $q>2$. Then, making use of the standard Krylov's regularity theory ~\cite{K87} of uniform parabolic equation, the estimates of higher derivatives of the solution to  \eqref{GCF2} can be naturally obtained, it illustrates that the long-time existence and regularity of the solution of \eqref{GCF2}. Furthermore, we have
\begin{equation}\label{ESM1}
||h||_{C^{i,j}_{x,t}(\sn\times[0,+\infty))}\leq C
\end{equation}
for each pairs of nonnegative integers $i$ and $j$, and for some $C>0$, independent of $t$.

Now, recall that Lemma \ref{monJ1}, it tells that
\begin{equation}\label{eq*}
\frac{d J(t)}{dt}\leq 0.
\end{equation}
Based on \eqref{eq*}, if there exists a $t_{0}$ such that
\begin{equation*}
\frac{d J(t)}{dt}\Big|_{t=t_{0}}= 0.
\end{equation*}
Then, it yields
\begin{equation}\label{Exil*}
h^{1-p}\det(h_{ij}+h\delta_{ij})\widetilde{V}_{q-1}(h,\overline{\nabla} h)=\frac{\omega_{n}}{2q}\frac{(n+q-1)I_{q}(\Omega)f(x)}{\int_{\sn}fh^{p}dx}, \quad  \forall x \in {\sn}.
\end{equation}
Let $\Omega=\Omega_{t_{0}}$, thus $\Omega$ is the desired solution.

On the other hand, if for any $t> 0$,
\begin{equation}\label{Phixiao}
\frac{d J(t)}{dt}<0.
\end{equation}

In the light of \eqref{ESM1}, by means of the Arzel\`a-Ascoli  theorem, we can extract a subsequence of $t$, denoted by $\{t_{k}\}_{k\in N}\subset (0,\infty)$, and there exists a smooth function $h(x)$ such that
\begin{equation*}
||h(x,t_{k})-h(x)||_{C^{i}({\sn})}\rightarrow 0
\end{equation*}
uniformly for any nonnegative integer $i$ as $t_{k}\rightarrow +\infty$. This implies that $h(x)$ is a support function. Let us denote by $\Omega$ the convex body determined by $h(x)$.  Thus, $\Omega$ is smooth, origin symmetric and strictly convex.

With the aid of  \eqref{ESM1} and the uniform estimates showed in Sec. \ref{Sec4} and Sec. \ref{Sec5}, we conclude that $J(t)$ is a bounded function in $t$ and $\frac{d J(t)}{dt}$ is uniformly continuous. So, for any $t>0$, using \eqref{Phixiao}, we get
\begin{equation*}
\int^{t}_{0}\left(-\frac{dJ(t)}{dt}\right)dt =J(0)-J(t)\leq C
\end{equation*}
for a positive constant $C$, which is independent of $t$. This gives
\begin{equation}\label{JH}
\int^{+\infty}_{0}\left(-\frac{d J(t)}{dt}\right)dt\leq C.
\end{equation}
\eqref{JH} implies that there exists a subsequence of time $t_{k}\rightarrow +\infty$ such that
\begin{equation}\label{}
\frac{dJ(t)}{dt}\Bigg|_{t=t_{k}}\rightarrow 0\quad as \ t_{k}\rightarrow +\infty.
\end{equation}
We are in a position to apply again the uniform estimates established in Sec. \ref{Sec4} and Sec. \ref{Sec5}. Clearly, there is a positive constant $\beta_{0}>0$ satisfying
\begin{equation}
\begin{split}
\label{ESM2}
\frac{d}{dt}J(t)\Bigg|_{t=t_{k}}&=-\int_{\sn}\frac{[-h+\frac{\kappa h^{p} \frac{\omega_{n}}{2q}(n+q-1)I_{q}(\Omega_{t})\frac{1}{\widetilde{V}_{q-1}(\Omega_{t},\overline{\nabla} h)}f}{\int_{\sn}f h^{p}dx}]^{2}}{\kappa h\frac{\omega_{n}}{2q}(n+q-1)I_{q}(\Omega_{t})\frac{1}{\widetilde{V}_{q-1}(\Omega_{t},\overline{\nabla} h)}}dx\Bigg|_{t=t_{k}}\\
&\leq -\beta_{0}\int_{\sn}[-h+\frac{\kappa h^{p} \frac{\omega_{n}}{2q}(n+q-1)I_{q}(\Omega_{t})\frac{1}{\widetilde{V}_{q-1}(\Omega_{t},\overline{\nabla} h)}f}{\int_{\sn}f h^{p}dx}]^{2}dx\Bigg|_{t=t_{k}}.
\end{split}
\end{equation}
Let $t_{k}\rightarrow +\infty$ in \eqref{ESM2}, we obtain
\begin{equation*}
0=\lim_{t_{k}\rightarrow +\infty}\frac{d J(t)}{dt}\Big|_{t=t_{k}}\leq -\beta_{0}\int_{\sn}[-h+\frac{\kappa h^{p} \frac{\omega_{n}}{2q}(n+q-1)I_{q}(\Omega)\frac{1}{\widetilde{V}_{q-1}(\Omega,\overline{\nabla} h)}f}{\int_{\sn}fh^{p}dx}]^{2}dx\leq 0,
\end{equation*}
which implies that
\begin{equation}\label{Exil*}
h^{1-p}\det(h_{ij}+h\delta_{ij})\widetilde{V}_{q-1}(h,\overline{\nabla } h)=\frac{\omega_{n}}{2q}\frac{(n+q-1)I_{q}(\Omega)f(x)}{\int_{\sn}f h^{p}dx}, \quad  \forall x \in {\sn}.
\end{equation}
Hence, $h(x)$ satisfies  \eqref{Exil*}. The proof is completed.

\end{document}